\documentclass[12pt]{amsart}

\oddsidemargin -.5cm
\evensidemargin -.5cm
\textwidth 17.3cm

\usepackage{tikz}
\usetikzlibrary{arrows, shapes}
\usepackage{xcolor}
\usepackage[utf8]{inputenc}
\usepackage[all]{xy}
\usepackage{lscape}
\usepackage{enumerate}
\usepackage{amsmath}
\usepackage{amssymb}
\usepackage{amsthm}
\usepackage[percent]{overpic}
\usepackage[linktocpage=true]{hyperref}
\usepackage{youngtab}

\numberwithin{equation}{section}
\usepackage{cleveref}
\usepackage{tikz-3dplot}
\usetikzlibrary{3d}

\usepackage{accents}

\setcounter{tocdepth}{1}
\makeatletter
\def\l@subsection{\@tocline{2}{0pt}{2.5pc}{5pc}{}}
\makeatother

\input{abbr.tex}
\newtheorem{fact}{Fact}[section]
\newtheorem{lemma}[fact]{Lemma}
\newtheorem{conjecture}{Conjecture}

\newtheorem*{theorem*}{Theorem}
\newtheorem{defi}[fact]{Definition}
\newtheorem{exa}[fact]{Example}
\newtheorem{cla}[fact]{Claim}

\newtheorem{proposition}[fact]{Proposition}
\newtheorem{corollary}[fact]{Corollary}

\newenvironment{definition}{\begin{defi} \rm}{\end{defi}}

\theoremstyle{definition}
\newtheorem{observation}[fact]{Empirical observation}

\theoremstyle{remark}

\newenvironment{remark}
{\par\pushQED{\qed}\nremark \small}
{\popQED\endnremark}

\theoremstyle{remark}

\newcommand{\dst}{\displaystyle}

\newcommand{\Co}{\ensuremath{\mathbb{C}}}

\newcommand{\TT}{\ensuremath{\mathbb{T}}}

\newcommand{\CC}{\ensuremath{\mathbb{C}}}

\def \s {\sigma}

\def \C {\mathbb{C}}

\newcommand{\ac}{\ensuremath{\mathcal{A}}}

\newcommand{\bv}{\ensuremath{\mathbf{b}}}
\newcommand{\av}{\ensuremath{\mathbf{a}}}
\newcommand{\gb}{\ensuremath{\mathbf{g}}}
\newcommand{\vb}{\ensuremath{\mathbf{v}}}
\newcommand{\eb}{\ensuremath{\mathbf{e}}}
\newcommand{\fb}{\ensuremath{\mathbf{f}}}
\newcommand{\ub}{\ensuremath{\mathbf{u}}}
\newcommand{\wb}{\ensuremath{\mathbf{w}}}

\newcommand{\xb}{\ensuremath{\mathbf{x}}}

\title{Triplets of Mutually Unbiased Bases}
\author{M\'ate Matolcsi, \'Akos K.\ Matszangosz, D\'aniel Varga, Mih\'aly Weiner}

\address{M\'at\'e Matolcsi, HUN-REN Alfr\'ed R\'enyi Institute of Mathematics, Re\'altanoda utca 13-15, 1053 Budapest, Hungary,
and Department of Analysis and Operations Research,
Institute of Mathematics,
Budapest University of Technology and Economics,
M\H uegyetem rkp. 3., H-1111 Budapest, Hungary
}
\email{matomate@renyi.hu}

\address{\'Akos K.\ Matszangosz, HUN-REN Alfr\'ed R\'enyi Institute of Mathematics, Re\'altanoda utca 13-15, 1053 Budapest, Hungary}
\email{matszangosz.akos@gmail.com}

\address{D\'aniel Varga, HUN-REN Alfr\'ed R\'enyi Institute of Mathematics, Re\'altanoda utca 13-15, 1053 Budapest, Hungary}
\email{daniel@renyi.hu}

\address{Mih\'aly Weiner, Department of Analysis and Operations Research,
Institute of Mathematics,
Budapest University of Technology and Economics,
M\H uegyetem rkp. 3., H-1111 Budapest, Hungary}
\email{elektrubadur@gmail.com}

\date{July 2025, MSC2020 05B20, 81P15}

\begin{document}

\begin{abstract}
We initiate a systematic study of triplets of mutually unbiased bases (MUBs). We show that each MUB-triplet in $\Co^d$ is characterized by a $d\times d\times d$ object that we call a {\it Hadamard cube}. We describe the basic properties of Hadamard cubes, and show how an MUB-triplet can be reconstructed from such a cube, up to unitary equivalence. 

We also present an algebraic identity which is conjectured to hold for all MUB-triplets in dimension 6. If true, it would imply the long-standing conjecture of Zauner that the maximum number of MUBs in dimension 6 is three. 
\end{abstract}

\maketitle

\section{Introduction}

The study of mutually unbiased bases (MUBs) was originally motivated in quantum information theory  by the physical property that if a system is prepared in an eigenstate of one of the bases, then a measurement conducted in any of the  other bases yields all possible outcomes with equal probability. Due to this property, MUBs  find several applications in quantum algorithms such as dense coding, teleportation, entanglement swapping, covariant cloning, and quantum state tomography (see \cite{durt} and \cite{mweig}  for comprehensive surveys on MUBs and their applications). 

\medskip

Recall that two orthonormal bases,
$X=(\eb_1,\ldots,\eb_d)$ and $Y=(\fb_1,\ldots,\fb_d)$ in $\CC^d$, are
called \emph{unbiased} if for every $j,k\in [d]\equiv \{1,\ldots d\}$,
$|\langle \eb_j,\fb_k \rangle|=\frac{1}{\sqrt{d}}$. In general, we will say that two unit vectors $\ub$ and $\vb$ in $\CC^d$ are \emph{unbiased} if $|\langle \ub,\vb \rangle|=\dst\frac{1}{\sqrt{d}}$. A collection
$X_1,\ldots X_m$ of orthonormal bases is said to be
\emph{(pairwise) mutually unbiased} if every two of them are
unbiased. In this note we will be interested in the case $m=2$ or $3$, i.e. pairs and triplets of MUBs.

\medskip

The maximal number of MUBs in $\Co^d$ is known to be $\le d+1$ for every $d$ (see e.g. \cite{BBRV,BBELTZ,WF}), and to be exactly $d+1$ if $d$ is a prime-power (see e.g. \cite{BBRV,Iv,KR,WF}).
The lowest dimension where the existence of a complete system of $d+1$ MUBs is not known is $d=6$, where the maximal number of MUBs is conjectured to be 3 by Zauner \cite{Za}. 

\medskip

In this note we initiate a systematic study of triplets of MUBs. To this end, let us first introduce the natural notion of unitary equivalence between systems of MUBs. In notation, throughout the text we use $[n]=\{1, 2, \dots, n\}$.

\begin{definition}\label{du}
Suppose $X_1=(\eb^{(1)}_1,\ldots,\eb^{(1)}_d), \dots, X_m=(\eb^{(m)}_1,\ldots,\eb^{(m)}_d)$
 and $Y_1=(\fb^{(1)}_1,\ldots,\fb^{(1)}_d)$, $\dots$, $Y_m=(\fb^{(m)}_1,\ldots,\fb^{(m)}_d)$ are two systems of mutually unbiased bases. Let $P^{(k)}_j=|\eb^{(k)}_j\rangle \langle \eb^{(k)}_j|$ be the orthogonal projection onto the 1-dimensional subspace spanned by $\eb^{(k)}_j$ and, similarly, $\tilde{P}^{(k)}_j=|\fb^{(k)}_j\rangle \langle \fb^{(k)}_j|$.  
 We say that the two systems of MUBs $(X_1, \dots, X_m)$ and $(Y_1, \dots, Y_m)$ are {\it directly unitary equivalent} if there exists a unitary operator $U$ on $\Co^d$ such that $UP^{(k)}_jU^\ast = \tilde{P}^{(k)}_j$ for all $k \in [m]$ and $ j\in [d]$. 
\end{definition}

Note that the condition $UP^{(k)}_jU^\ast = \tilde{P}^{(k)}_j$ is equivalent to saying that $U\eb_j^{(k)}=\lambda_{j,k} \fb_j^{(k)}$ for some complex number $\lambda_{j,k}$ of absolute value $1$, i.e. the unitary operator $U$ takes the basis vectors of the first system into the basis vectors of the second "up to some phase factors". The phrase "directly" refers to the fact that $U$ respects the order of the bases, and the order of the vectors. This simplifies the notation in several proofs. However, it is more natural to introduce a notion of unitary equivalence where permutation of indices are allowed. 

\begin{definition}\label{pu}
We say that the two systems of MUBs $(X_1, \dots, X_m)$, $(Y_1, \dots, Y_m)$ are {\it permutationally unitary equivalent} if there exists a unitary operator $U$ on $\CC^d$, and permutations $\pi\in S_m$, $\s_1, \dots, \s_m\in S_d$ such that $UP^{(k)}_jU^\ast = \tilde{P}^{(\pi(k))}_{\s_k(j)}$ for all $k \in [m], j\in [d]$. 
\end{definition}

\begin{remark}
For any orthonormal basis $X$ in $\CC^d$, let $\ac_X$ denote the subalgebra of operators on $\CC^d$ which are diagonal in the basis $X$. It is easy to see that two systems of MUBs, $X_1, \dots, X_m$
 and $Y_1, \dots Y_m$ are permutationally unitary equivalent if and only if there exists a permutation $\pi\in S_m$ and a unitary operator $U$ on $\CC^d$ such that $U\ac_{X_i}U^\ast =\ac_{Y_{\pi(i)}}$.  
\end{remark}

The rest of the paper is devoted to studying pairs and triplets of MUBs up to unitary equivalence. In Section \ref{sec2} we revisit the characterization of unitary equivalence of pairs of MUBs in terms of complex Hadamard matrices and Haagerup invariants. In Section \ref{sec3} we introduce the main new concept of the paper, the notion of a {\it Hadamard cube}, and show how such a cube determines a MUB-triplet up to unitary equivalence. In Section \ref{sec4} we present some numerical evidence about Hadamard cubes in dimension 6, and formulate a conjectured algebraic identity that can lead to the proof of Zauner's conjecture on the maximal number of MUBs being three in this case. It is worth noting the philosophy behind this approach. Thus far, all attempts in the literature have failed to prove the non-existence of a quadruple of MUBs in dimension 6. The main new idea here is to carry out such a proof in two steps. First, provide a complete characterization of triplets of MUBs (which are known to exist, and hence numerical searches can help by giving insight). Then, as a second step, prove that these (explicitly given) MUB-triplets cannot be extended with a fourth basis. 

In spirit, this approach is similar to the one where one attempts to give a complete characterization of complex Hadamard matrices (i.e.\ MUB pairs, instead of triplets). However, as testified by the literature (see \cite{generic}),  there are just "too many" complex Hadamard matrices in dimension $6$, and an explicit description seems hopeless. On the contrary, as we shall explain, numerical evidence suggests that all triplets have a certain, easily understandable structure.

\section{Pairs of MUBs}\label{sec2}

In this section we consider pairs of mutually unbiased bases. The results below are basically reformulations of well-known facts from the literature, and our only purpose in stating them explicitly is to motivate the discussion for MUB-triplets in later sections. 

\medskip

Let $X=\{\eb_1,\ldots,\eb_d\}$ and $Y=\{\fb_1,\ldots,\fb_d\}$ be a pair of mutually unbiased bases in $\CC^d$. Let $P_j=|\eb_j\rangle \langle \eb_j|$ and $Q_k=|\fb_k\rangle \langle \fb_k|$ denote the orthogonal projections onto the subspaces spanned by $\eb_j$ and $\fb_k$. 

\medskip

It is well-known that the  pair $X, Y$ gives rise to a {\it complex Hadamard matrix} $H_{X,Y}$ with matrix elements given by the formula
\begin{equation}\label{had}
h_{j,k}=\sqrt{d}\langle\eb_j,\fb_k \rangle \; \;  \;
(j,k\in [d]).    
\end{equation}
 That is, the columns (and hence also the rows) of the matrix $H_{X,Y}\in M_d(\CC)\equiv \CC^{d\times d}$ are pairwise orthogonal and all entries of $H_{X,Y}$ have modulus 1. 

\medskip

Two complex Hadamard matrices $H_1$ and $H_2$ are called {\it equivalent}, in notation $H_1\cong H_2$, if

\begin{equation}\label{eqhad}
H_1=P_1 D_1 H_2 D_2 P_2    
\end{equation}

with some unitary diagonal matrices $D_1, D_2$
and permutation matrices $P_1, P_2$. Although not explicitly stated in the literature, we will see below that this notion of equivalence corresponds to the notion of permutational unitary equivalence of pairs of MUBs, as given in Definition \ref{pu}. First, however, let us recall the Haagerup invariants $g^H_{j,k,l,r}=h_{j,k}\overline{h_{k,l}}h_{l,r}\overline{h_{r,j}}$ corresponding to a complex Hadamard matrix $H$.
These invariants are often used in the study of complex Hadamard matrices. Notice that for us, with $H$ being the Hadamard matrix associated to the MUB-pair $X,Y$, we have 
\begin{equation}\label{haag}
g^H_{j,k,l,r}=\langle \eb_j, \fb_k\rangle \langle \fb_k, \eb_l \rangle \langle \eb_l, \fb_r\rangle \langle \fb_r, \eb_j\rangle ={\rm Tr}(P_j Q_k P_l Q_r)
\end{equation}
and hence $g^H_{j,k,l,r}$ remains unchanged if each of the basis vectors is multiplied by some (not necessarily the same) complex number of unit absolute magnitude -- that is why we say that these values are "invariants". In other words, if $H,\tilde{H}$ are Hadamard matrices and there exist two diagonal matrices $D_1,D_2$ such that $\tilde{H}=D_1 H D_2$, then $g^H=g^{\tilde{H}}$. 
\medskip

With these notions at hand we can now formulate rigorous conditions for the unitary equivalence of pairs of MUBs.

\begin{proposition}\label{pairdu}
Suppose $X=(\eb_1,\ldots,\eb_d)$, $Y=(\fb_1,\ldots,\fb_d)$ and $V=(\vb_1,\ldots,\vb_d)$, $W=(\wb_1,\ldots,\wb_d)$ are two pairs of mutually unbiased bases in $\CC^d$, with the corresponding  complex Hadamard matrices $H=H_{X,Y}$ and $\tilde{H}=H_{V,W}$ defined as in \eqref{had}. The following are equivalent: 
\begin{enumerate}
    \item the pair $(X, Y)$ is directly unitary equivalent (as in Definition \ref{du}) to the pair $(V, W)$,
    \item there exist two diagonal matrices $D_1,D_2$ such that $\tilde{H}=D_1 H D_2$,
    \item the corresponding Haagerup invariants are equal, i.e. $\forall j,k,l,r:\; g^H_{j,k,l,r}=g^{\tilde{H}}_{j,k,l,r}$,
    \end{enumerate}
\end{proposition}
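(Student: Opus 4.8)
The plan is to establish the cycle of implications $(1)\Rightarrow(2)\Rightarrow(3)$ together with $(3)\Rightarrow(2)\Rightarrow(1)$, so that all three statements become equivalent. The implication $(2)\Rightarrow(3)$ is already recorded in the discussion preceding the statement: each Haagerup invariant is a cyclic product of overlaps in which every basis vector appears once in a bra and once in a ket, so replacing $H$ by $D_1HD_2$ introduces, for each index, a phase together with its conjugate, and these cancel. The genuinely substantial step is $(3)\Rightarrow(2)$, which I will treat last.

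For $(1)\Rightarrow(2)$, recall from the remark following Definition~\ref{du} that direct unitary equivalence supplies a unitary $U$ and unit-modulus scalars $\alpha_j,\beta_k$ with $U\eb_j=\alpha_j\vb_j$ and $U\fb_k=\beta_k\wb_k$. Writing $\vb_j=\overline{\alpha_j}\,U\eb_j$ and $\wb_k=\overline{\beta_k}\,U\fb_k$ and using $\langle Ua,Ub\rangle=\langle a,b\rangle$ gives $\langle\vb_j,\wb_k\rangle=\alpha_j\overline{\beta_k}\langle\eb_j,\fb_k\rangle$, that is $\tilde H=D_1HD_2$ with $D_1=\diag(\alpha_j)$ and $D_2=\diag(\overline{\beta_k})$.

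For $(2)\Rightarrow(1)$, write $D_1=\diag(\alpha_j)$ and $D_2=\diag(\gamma_k)$; since all entries of $H$ and $\tilde H$ have modulus $1$ we may take $D_1,D_2$ unitary, so that $\langle\vb_j,\wb_k\rangle=\alpha_j\gamma_k\langle\eb_j,\fb_k\rangle$. Let $U$ be the unique unitary with $U\eb_j=\alpha_j\vb_j$ for every $j$, which exists because $(\eb_j)$ and $(\vb_j)$ are orthonormal bases. Expanding $\fb_k=\sum_j\langle\eb_j,\fb_k\rangle\eb_j$ and applying $U$ yields $U\fb_k=\sum_j\alpha_j\langle\eb_j,\fb_k\rangle\vb_j=\gamma_k^{-1}\sum_j\langle\vb_j,\wb_k\rangle\vb_j=\gamma_k^{-1}\wb_k$. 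The feature that makes a single $U$ align both bases at once is that every overlap $\langle\eb_j,\fb_k\rangle$ is nonzero, by the unbiasedness condition $|\langle\eb_j,\fb_k\rangle|=1/\sqrt d$; hence $U$ realizes the direct unitary equivalence of $(X,Y)$ and $(V,W)$.

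Finally, for $(3)\Rightarrow(2)$ I would argue by dephasing. Fixing a reference index $1$, the trace expression \eqref{haag} gives $g^H_{j,k,1,1}=\mathrm{Tr}(P_jQ_kP_1Q_1)=d^{-2}\,h_{j,k}h_{1,1}\overline{h_{1,k}}\,\overline{h_{j,1}}$, and since every entry has modulus $1$ this is exactly $d^{-2}$ times the dephased entry $h'_{j,k}=h_{j,k}h_{1,1}/(h_{1,k}h_{j,1})$. These dephased entries are precisely the matrix elements of $H'=D_1HD_2$ for the explicit diagonal matrices that turn the first row and first column of $H$ into all ones. Thus equality of all invariants forces $h'_{j,k}=\tilde h'_{j,k}$, i.e.\ $H'=\tilde H'$, and undoing the two diagonal dephasings gives $\tilde H=E_1HE_2$ with $E_1,E_2$ diagonal. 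I expect the main obstacle to be exactly this observation that the dephased entries are themselves Haagerup invariants, so that condition $(3)$ pins down $H$ up to the equivalence in $(2)$; the only routine point to verify along the way is that dephasing is always available, which holds because no Hadamard entry vanishes.
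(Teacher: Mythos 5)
Your proof is correct and follows essentially the same route as the paper: the key step in both is to dephase by the first row and column and observe that the dephased entries are (up to normalization) exactly the Haagerup invariants $g_{j,k,1,1}$, so condition (3) pins the matrix down up to diagonal equivalence. The only cosmetic difference is that you factor the hard direction as $(3)\Rightarrow(2)\Rightarrow(1)$, while the paper proves $(3)\Rightarrow(1)$ directly by adjusting the phases of the basis vectors, which amounts to the same dephasing.
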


Note here that we require $g^H_{j,k,l,r}=g^{\tilde{H}}_{j,k,l,r}$, for all quadruples  $j,k,l,r$, which is much stronger than just requiring the sets $\{ g^H_{j,k,l,r}: \ 1\le j,k,l,r \le d\}$ and $\{ g^{\tilde{H}}_{j,k,l,r}: \ 1\le j,k,l,r \le d\}$ being equal. The latter is not sufficient for equivalence of complex Hadamard matrices. 

\begin{proof}
The directions $(1)\Rightarrow(2)\Rightarrow (3)$ are trivial. We will show here only the implication $(3)\Rightarrow (1)$. 

Assume the Haagerup invariants corresponding to $H$ and $\tilde{H}$ are equal; $g^H=g^{\tilde{H}}=:g$. Multiplying each of the basis vectors with a "phase factor" -- i.e. with a complex number of unit length -- does not change the rank one projections determined by these vectors and, ultimately, whether these systems are directly unitary equivalent or not. It does not change the value of the Haagerup invariants either. Hence, by adjusting the phases of the vectors in $Y$, $\fb_j'=\alpha_j\fb_j$ ($|\alpha_j|=1)$,  we may arrange that  $\langle \fb_j', \eb_1\rangle =1/\sqrt{d}$. We may then adjust the phases of the vectors in $X$,  $\eb_k'=\beta_k\eb_k$ ($|\beta_k|=1)$, in such a way that $\langle \eb_k', \fb_1' \rangle=1/\sqrt{d}$. By a similar modification, we can achieve that $\langle \vb_j',\wb_k' \rangle = 1/\sqrt{d}$, whenever $j$ or $k$ is equal to 1. 

Then
$$
\left(\frac{1}{\sqrt{d}}\right)^3\langle\eb_j',\fb_k'\rangle =
\langle\eb_j',\fb_k'\rangle
\langle\fb_k',\eb_1'\rangle\, 
\langle\eb_1',\fb_1'\rangle\, 
\langle\fb_1',\eb_j' \rangle\, 
=
g_{j,k,1,1}
$$
and similarly, $(1/\sqrt{d})^3 \langle\vb_j',\wb_k'\rangle =g_{j,k,1,1}$ showing that $\langle\eb_j',\fb_k'\rangle =\langle\vb_j',\wb_k'\rangle$ for every $1\le j, k\le d$. 

It follows that for the uniquely determined unitary operator $U$ such that $U\eb_k' =\vb_k'$ (for all $k\in [d]$), we also have $U\fb_k' =\wb_k'$ (for all $k\in [d]$), and hence that $U$ establishes a direct unitary equivalence between the pairs $(X,Y)$ and $(V,W)$. 
\end{proof}

We can now connect the notion of equivalence of complex Hadamard matrices to the notion of permutational unitary equivalence of MUB-pairs. 

\begin{proposition}\label{pairpu}
Let $X=\{\eb_1,\ldots,\eb_d\}$, $Y=\{\fb_1,\ldots,\fb_d\}$ and $V=\{\vb_1,\ldots,\vb_d\}$, $W=\{\wb_1,\ldots,\wb_d\}$ be two pairs of mutually unbiased bases in $\CC^d$, with the corresponding  complex Hadamard matrices $H=H_{X,Y}$ and $\tilde{H}=H_{V,W}$ defined as in \eqref{had}.  
The following are equivalent: 
\begin{enumerate}
\item The complex Hadamard matrix $H$ is equivalent to $\tilde{H}$ or $\tilde{H}^\ast$. That is, $H=P_1 D_1 \tilde{H} D_2 P_2$ or $H=P_1 D_1 \tilde{H}^\ast D_2 P_2$  with some unitary diagonal matrices $D_1, D_2$
and permutation matrices $P_1, P_2$.

\item The pair $(X, Y)$ is permutationally unitary equivalent (as in Definition \ref{pu}) to the pair $(V, W)$
\end{enumerate}
\end{proposition}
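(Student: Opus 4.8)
The plan is to set up a dictionary between the three elementary operations appearing in the Hadamard equivalence \eqref{eqhad} and the moves allowed in permutational unitary equivalence (Definition \ref{pu}), and then to reduce everything to the already-established Proposition \ref{pairdu}. There are three moves to account for. First, if one rephases $\eb_j'=\beta_j\eb_j$ with $|\beta_j|=1$, then by \eqref{had} the $j$-th row of $H$ is multiplied by $\overline{\beta_j}$, so rephasing the vectors of $X$ (resp.\ $Y$) corresponds exactly to left (resp.\ right) multiplication of $H$ by a unitary diagonal matrix; crucially, such rephasing leaves all projections $P_j,Q_k$ unchanged. Second, permuting the vectors of $X$ (resp.\ $Y$) permutes the rows (resp.\ columns) of $H$, producing the permutation matrices $P_1,P_2$. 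Third, a direct computation from \eqref{had} together with conjugate-symmetry of the inner product gives $H_{Y,X}=H_{X,Y}^\ast$, so swapping the two bases of a pair corresponds to passing to the conjugate transpose; this is the origin of the alternative $\tilde H^\ast$ in statement (1).

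To prove $(2)\Rightarrow(1)$, suppose a unitary $U$, a permutation $\pi\in S_2$ and $\s_1,\s_2\in S_d$ realise the equivalence. If $\pi=\id$, I would first relabel the vectors of $V$ and $W$ according to $\s_1,\s_2$ to obtain a pair $(V',W')$ that is \emph{directly} unitary equivalent to $(X,Y)$; by the second item of the dictionary, $H_{V',W'}$ differs from $\tilde H$ only by permutation matrices on the two sides, while Proposition \ref{pairdu} supplies unitary diagonal matrices with $H=D_1 H_{V',W'}D_2$. Commuting the diagonal and permutation factors (a permutation conjugates a diagonal matrix to a diagonal matrix) then collects everything into the single product $H=P_1 D_1\tilde H D_2 P_2$. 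If instead $\pi=(12)$, then $U$ carries $X$ to $W$ and $Y$ to $V$ up to permutations and phases, i.e.\ $(X,Y)$ is equivalent in the $\pi=\id$ sense to the swapped pair $(W,V)$; since $H_{W,V}=\tilde H^\ast$ by the third item, applying the previous case to $(W,V)$ yields $H=P_1 D_1\tilde H^\ast D_2 P_2$.

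For $(1)\Rightarrow(2)$, assume first $H=P_1 D_1\tilde H D_2 P_2$. Reading the dictionary backwards, I would manufacture from $(V,W)$ a new pair $(V',W')$ by permuting its vectors according to $P_1,P_2$ and rephasing them according to $D_1,D_2$, arranged so that $H_{V',W'}=H=H_{X,Y}$ exactly. Equal Hadamard matrices are the special case of condition (2) of Proposition \ref{pairdu} with identity diagonals, so $(X,Y)$ is directly — hence permutationally — unitary equivalent to $(V',W')$; and $(V',W')$ is permutationally unitary equivalent to $(V,W)$ by construction. Since permutational unitary equivalence is an equivalence relation (one may compose unitaries and permutations, cf.\ the characterisation $U\ac_{X_i}U^\ast=\ac_{Y_{\pi(i)}}$ in the Remark), composing the two equivalences gives (2). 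If instead $H=P_1 D_1\tilde H^\ast D_2 P_2$, the same argument applied with $\tilde H^\ast=H_{W,V}$ shows $(X,Y)$ is permutationally unitary equivalent to $(W,V)$, which is in turn permutationally unitary equivalent to $(V,W)$ via $\pi=(12)$; composing once more gives (2).

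The substantive content lies entirely in the dictionary of the first paragraph together with Proposition \ref{pairdu}; the remainder is bookkeeping. The step I expect to demand the most care is this bookkeeping in $(2)\Rightarrow(1)$: one must verify that rephasing and reordering can be separated cleanly, so that the diagonal factors produced by Proposition \ref{pairdu} and the permutation factors coming from $\s_1,\s_2$ can be combined into the prescribed form $P_1 D_1\tilde H D_2 P_2$, which uses only that $P D P^{-1}$ is diagonal whenever $P$ is a permutation matrix and $D$ is diagonal. The single genuine identity to check, $H_{Y,X}=H_{X,Y}^\ast$, is immediate from \eqref{had}, and it is precisely what forces the appearance of $\tilde H^\ast$ (rather than $\tilde H^{\mathsf T}$) in statement (1).
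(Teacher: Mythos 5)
Your proof is correct and takes essentially the same route as the paper: the paper's own argument is a two-line remark that everything reduces to Proposition \ref{pairdu} once one observes that $H_{Y,X}=H_{X,Y}^\ast$ and that reordering basis vectors corresponds to permutation matrices acting on the rows and columns. Your write-up simply makes explicit the bookkeeping (including the commutation of diagonal and permutation factors) that the paper leaves to the reader.
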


\begin{proof}
The statement readily follows  from the previous proposition. All we need to observe is that the Hadamard matrices $H_{X,Y}$ and $H_{Y,X}$, corresponding to the pairs $(X,Y)$ and $(Y,X)$, are adjoints of each other, and a change of the order of vectors in $X$ (or $Y$) corresponds to multiplication of $H_{XY}$ by a permutation matrix from the left (or from the right).  
\end{proof}

\begin{remark}
In view of this proposition, it would make sense to use an alternative definition of equivalence of complex Hadamard matrices. Namely, it would be natural to call $H$ and $\tilde{H}$ equivalent if 
$H=P_1 D_1 \tilde{H} D_2 P_2$ or $H=P_1 D_1 \tilde{H}^\ast D_2 P_2$  with some unitary diagonal matrices $D_1, D_2$
and permutation matrices $P_1, P_2$. However, in the literature, the traditional definition of equivalence does not include the adjoint $\tilde{H}^\ast$.  
\end{remark}

\section{MUB-triplets and Hadamard cubes}\label{sec3}

In this section we consider triplets of mutually unbiased bases, and provide a characterization result up to unitary equivalence. To motivate the discussion below, observe that for any three vectors $\ub_1,\ub_2,\ub_3$ the product 

\begin{equation}
\label{product_comb_vect}
\langle \ub_1, \ub_2\rangle \,
\langle \ub_2, \ub_3\rangle \,
\langle \ub_3, \ub_1\rangle     
\end{equation}
is independent of the ``phases'' of the vectors; that is, replacing any of these  vectors $\ub$ by a vector $\ub'=\lambda \ub$, where 
$|\lambda|=1$ (henceforth written as $\ub'\sim \ub$) will cause no change in the product value. Furthermore, 
when all the vectors are of unit length, we may re-write (\ref{product_comb_vect}) as
\begin{equation}
\label{product_comb_vect_tr}
\langle \ub_1, \ub_2\rangle \,
\langle \ub_2, \ub_3\rangle \,
\langle \ub_3, \ub_1\rangle =
\rm{Tr}(\;
|\ub_1\rangle\!\langle \ub_1|  \ub_2\rangle \,
\langle \ub_2 | \ub_3\rangle \,
\langle \ub_3 |  
  = {\rm Tr}(P_1P_2P_3),
\end{equation}
where $P_j$ are the orthogonal projections  
onto the one-dimensional subspaces spanned by the vectors $\ub_j$
($j=1,2,3$).

\medskip

Motivated by the formula above, given a triplet of mutually unbiased bases $X=(\eb_1, \dots, \eb_d), Y=(\fb_1, \dots, \fb_d), Z=(\gb_1, \dots, \gb_d)$ in $\Co^d$, we will consider the normalized products
\begin{equation}\label{hcube1}
C_{j,k,l}= \sqrt{d^3}\,
\langle \eb_j, \fb_k\rangle 
\langle \fb_k, \gb_l\rangle
\langle \gb_l, \eb_j\rangle \;\;\;\; (1\le j,k,l\le d).
\end{equation}

We may think of $C$ as a cube of size $d\times d\times d$, filled with complex numbers of modulus 1. We shall say that it is the  {\it Hadamard cube associated with the MUB-triplet} $(X, Y, Z)$. Note that we may rewrite the defining equation above as
$$
C_{j,k,l}= h_{j,k} \,\tilde{h}_{k,l}
\,\tilde{\tilde{h}}_{l,j}
$$
where
$$
h_{j,k} = {\sqrt{d}}
\langle \eb_j, \fb_k\rangle,\;\;
\tilde{h}_{k,l} =\sqrt{d}
\langle \fb_k, \gb_l\rangle,\;\;
\tilde{\tilde{h}}_{k,l} ={\sqrt{d}}
\langle \gb_k, \eb_l\rangle
$$
are the matrix entries of the $3$ complex Hadamard matrices $H,\tilde{H}$ and $\tilde{\tilde{H}}$ corresponding to the MUB-pairs $(X, Y), (Y, Z)$ and $(Z, X)$. 

\medskip

We now summarize the basic properties of the Hadamard cube $C$. 

\begin{proposition}\label{hcube}
For the Hadamard cube $C$ associated with a MUB-triplet $(X, Y, Z)$, as defined in \eqref{hcube1}, we have that
\begin{itemize}
    \item[i)] its entries are normalized: $\forall j,k,l\in[d]$: $|C_{j,k,l}|=1$,
    \item[ii)] each of its two-dimensional cross-sections ("slices") form a complex Hadamard matrix, i.e. for any fixed $j\in [d]$ the slices $C_{j,\cdot,\cdot}, C_{\cdot,j,\cdot}$ and $C_{\cdot,\cdot,j}$ are complex Hadamard matrices, 
    \item[iii)] parallel slices are phase-equivalent complex 
    Hadamard matrices; that is, for the parallel cross-sections $C_{j,\cdot,\cdot}$ and 
    $C_{j',\cdot,\cdot}$,  there exist scalars $\lambda_k, \mu_l$ ($k, l\in [d]$) such that $C_{j',k,l}=\lambda_k \, C_{j,k,l}\,\mu_l$, and similarly for parallel cross-sections in the other two directions,

    \item[iv)] its one-dimensional cross-sections ("piercings") sum to $\sqrt{d}$, that is, for any fixed $k,l\in[d]$ we have $\sum_{j=1}^d C_{j,k,l}=\sqrt{d}$ , and similarly for piercings of the other two directions.
\end{itemize}
\end{proposition}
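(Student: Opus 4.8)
The plan is to read off all four properties from the product factorization $C_{j,k,l}=h_{j,k}\,\tilde h_{k,l}\,\tilde{\tilde h}_{l,j}$ recorded just above the statement. Properties (i)--(iii) will be pure consequences of the fact that $H,\tilde H,\tilde{\tilde H}$ are complex Hadamard matrices, while (iv) is the only one that genuinely uses the MUB structure. Property (i) is immediate, since each of the three factors has modulus $1$ (as $X,Y,Z$ are pairwise unbiased), whence $|C_{j,k,l}|=|h_{j,k}|\,|\tilde h_{k,l}|\,|\tilde{\tilde h}_{l,j}|=1$.

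For (ii) and (iii), the key observation is that fixing one of the three indices exhibits the resulting slice as a diagonal-unitary conjugate of one of $H,\tilde H,\tilde{\tilde H}$ (or of its transpose). For instance, fixing the first index $j$ and reading $(k,l)$ as (row, column), one has $C_{j,k,l}=(D_1)_{k}\,\tilde H_{k,l}\,(D_2)_{l}$ with $D_1=\diag(h_{j,k})_k$ and $D_2=\diag(\tilde{\tilde h}_{l,j})_l$ diagonal and unitary. Since left and right multiplication by diagonal unitaries preserves being a complex Hadamard matrix, the slice $C_{j,\cdot,\cdot}$ is Hadamard; fixing the middle index gives a diagonal-unitary conjugate of $\tilde{\tilde H}^{T}$ and fixing the last index a conjugate of $H$, and transposition preserves Hadamard-ness, so every slice is a complex Hadamard matrix. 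For (iii), when two slices are parallel the Hadamard factor carrying the two free indices is common to both and cancels in the quotient; e.g.\ $C_{j',k,l}=\lambda_k\,C_{j,k,l}\,\mu_l$ with $\lambda_k=h_{j',k}/h_{j,k}$ and $\mu_l=\tilde{\tilde h}_{l,j'}/\tilde{\tilde h}_{l,j}$, both of modulus $1$, and cyclically for the other two directions.

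For (iv) I would fix $k,l$, sum \eqref{hcube1} over $j$, pull out the $j$-independent factor $\langle \fb_k,\gb_l\rangle$, and invoke the resolution of identity $\sum_j |\eb_j\rangle\langle \eb_j|=\Id$ for the orthonormal basis $X$ to collapse $\sum_j \langle \gb_l,\eb_j\rangle\langle \eb_j,\fb_k\rangle=\langle \gb_l,\fb_k\rangle$. This gives $\sum_j C_{j,k,l}=\sqrt{d^3}\,|\langle \fb_k,\gb_l\rangle|^2=\sqrt{d^3}\cdot d^{-1}=\sqrt d$, the last equality being the unbiasedness of $Y$ and $Z$. The two remaining piercing directions are identical, using completeness of $Y$ (resp.\ $Z$) together with unbiasedness of the complementary pair.

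I do not expect a genuine obstacle: the argument is bookkeeping. The one spot that needs care is keeping track, in (ii)--(iii), of which of the three Hadamard matrices appears for each slicing direction and whether a transpose is involved; and the conceptual crux is (iv), which is the sole place where two distinct MUB facts are used simultaneously — completeness of one basis to collapse the $j$-sum, and unbiasedness of the other two to evaluate the surviving squared overlap as $1/d$.
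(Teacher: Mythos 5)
Your proof is correct and follows essentially the same route as the paper: properties (i)--(iii) are read off from the factorization $C_{j,k,l}=h_{j,k}\tilde h_{k,l}\tilde{\tilde h}_{l,j}$ (which the paper dismisses as immediate and you usefully spell out, including the diagonal-unitary and transpose bookkeeping), and (iv) is the same resolution-of-identity argument, which the paper phrases as $\sum_j P_j = I$ together with $\mathrm{Tr}(Q_kR_l)=1/d$ while you phrase it directly with inner products. No gaps.
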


\begin{proof}

Using the fact that $X,Y, Z$ form an MUB-triplet, properties (i), (ii) and (iii) follow immediately from the defining equation \eqref{hcube1}.  

To prove (iv), we use the notation $P_j=|\eb_j\rangle \langle \eb_j|$, $Q_k=|\fb_k\rangle \langle \fb_k|$, $R_l=|\gb_l\rangle \langle \gb_l|$, and equation \eqref{product_comb_vect_tr}, to obtain
$\sum_{j=1}^d C_{j,k,l}=\sqrt{d^3}
\sum_{j=1}^d \rm{Tr} \ P_jQ_kR_l=\sqrt{d^3}
\rm{Tr} \ (\sum_{j=1}^d P_j)Q_kR_l=\sqrt{d^3}\rm{Tr} \ Q_kR_l=\sqrt{d}$, where we have used the fact the $\sum_{j=1}^d P_j=I$, and $\rm{Tr} \ Q_kR_l=1/d$. 
\end{proof}

As the properties (i)-(iv) are fundamental, it is natural to introduce the following definition. 

\begin{definition}
A cube of size $d\times d\times d$ is called a \emph{Hadamard cube} if it satisfies properties (i)-(iv) in Proposition \ref{hcube}.
\end{definition}

Note that -- for reasons of clarity -- we have listed property (i) separately, although it is redundant, as it follows from (ii) by definition. In fact, later in this section we shall see that there are further redundancies, and some weaker assumptions also imply conditions (i)-(iv). 

\medskip

Having introduced the concept of a Hadamard cube, we have two immediate questions to consider. First, given a Hadamard cube $C$, does there exist a MUB-triplet $(X, Y, Z)$ such that its associated Hadamard cube is exactly $C$? If so, does the Hadamard cube $C$ determine the  MUB-triplet $(X, Y, Z)$ up to unitary equivalence? Below we shall answer both questions in the affirmative.

\begin{proposition}
\label{prop:triplet_equiv}
Assume
$((\eb_j)_{j=1}^d,(\fb_j)_{j=1}^d,(\gb_j)_{j=1}^d)$ and
$((\tilde{\eb}_j)_{j=1}^d,(\tilde{\fb}_j)_{j=1}^d,(\tilde{\gb}_j)_{j=1}^d)$
are two MUB-triplets in $\C^d$. They
give rise to the same Hadamard cube, i.e. 
$$
\langle \eb_j, \fb_k\rangle 
\langle \fb_k, \gb_l\rangle
\langle \gb_l, \eb_j\rangle 
= 
\langle \tilde{\eb}_j, \tilde{\fb}_k\rangle 
\langle \tilde{\fb}_k, \tilde{\gb}_l\rangle
\langle \tilde{\gb}_l, \tilde{\eb}_j\rangle 
$$
for all $j,k,l\in [n]$, if and only if the MUB-triplets  are directly unitary equivalent in the sense of Definition \ref{du}.
\end{proposition}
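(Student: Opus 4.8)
The plan is to establish the two implications separately; the substantive content is the direction ``equal cubes $\Rightarrow$ directly unitary equivalent'', which I would handle by adapting the normalization argument used in the proof of Proposition~\ref{pairdu}. For the easy direction, suppose the triplets are directly unitary equivalent, so that (by the remark following Definition~\ref{du}) there is a unitary $U$ with $U\eb_j\sim\tilde\eb_j$, $U\fb_k\sim\tilde\fb_k$ and $U\gb_l\sim\tilde\gb_l$. Since $U$ preserves inner products and the triple product in \eqref{hcube1} is insensitive to the phases of the three vectors (as observed around \eqref{product_comb_vect}), applying $U$ to the three vectors in each product shows that the two cubes coincide.

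For the converse I would first exploit the phase freedom to put both triplets into the same normal form, and then read all pairwise inner products off the common cube. Write $c_{j,k,l}:=C_{j,k,l}/\sqrt{d^3}=\langle\eb_j,\fb_k\rangle\langle\fb_k,\gb_l\rangle\langle\gb_l,\eb_j\rangle$ for the unnormalized cube. I would gauge-fix each triplet identically in three steps: adjust the phases of the $\fb_k$ so that $\langle\eb_1,\fb_k\rangle=1/\sqrt d$ for all $k$; adjust the phases of the $\gb_l$ so that $\langle\eb_1,\gb_l\rangle=1/\sqrt d$ for all $l$; and adjust the phases of the $\eb_j$ (for $j\ge 2$; the case $j=1$ is automatic from the first step) so that $\langle\eb_j,\fb_1\rangle=1/\sqrt d$ for all $j$. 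Each step is possible because the relevant inner product has modulus $1/\sqrt d$, and the steps do not interfere: the third changes only the $\eb_j$ with $j\ge2$, leaving $\eb_1$ and $\fb_1$ (used in the first two steps) fixed. Since the cube is phase-invariant, these rescalings leave $c$ unchanged, so after normalization both triplets still realize the same array $c$.

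The key step is then to recover the three pairwise Hadamard matrices from $c$ by bootstrapping. Putting $j=1$ collapses two factors, since $\langle\eb_1,\fb_k\rangle=\langle\gb_l,\eb_1\rangle=1/\sqrt d$, and gives $\langle\fb_k,\gb_l\rangle=d\,c_{1,k,l}$ outright. Putting $k=1$ and using $\langle\eb_j,\fb_1\rangle=1/\sqrt d$ together with the value $\langle\fb_1,\gb_l\rangle=d\,c_{1,1,l}$ just obtained recovers $\langle\eb_j,\gb_l\rangle$; and putting $l=1$ and using the now-known $\langle\fb_k,\gb_1\rangle$ and $\langle\gb_1,\eb_j\rangle$ recovers $\langle\eb_j,\fb_k\rangle$. (All divisors are cube entries or normalized inner products, hence nonzero.) Thus, after the identical normalization, $\langle\eb_j,\fb_k\rangle$, $\langle\eb_j,\gb_l\rangle$ and $\langle\fb_k,\gb_l\rangle$ are determined by $c$ alone, so they equal $\langle\tilde\eb_j,\tilde\fb_k\rangle$, $\langle\tilde\eb_j,\tilde\gb_l\rangle$ and $\langle\tilde\fb_k,\tilde\gb_l\rangle$ respectively. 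Finally, as in the pair case, I would let $U$ be the unitary sending the orthonormal basis $(\eb_j)$ to $(\tilde\eb_j)$; expanding $\fb_k=\sum_j\langle\eb_j,\fb_k\rangle\eb_j$ and $\gb_l=\sum_j\langle\eb_j,\gb_l\rangle\eb_j$ and invoking the matching of $\eb$-coordinates gives $U\fb_k=\tilde\fb_k$ and $U\gb_l=\tilde\gb_l$, so $U$ realizes a direct unitary equivalence.

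I expect the only genuine obstacle to be the disentanglement in the previous paragraph: a priori the cube stores only cyclic triple products, so it is not obvious that the individual inner products can be extracted. The resolution is precisely the choice of gauge — privileging $\eb_1$ and the first vectors $\fb_1,\gb_1$ so that two of the three factors in each relevant triple product collapse to $1/\sqrt d$ — combined with the bootstrapping order (first $\langle\fb_k,\gb_l\rangle$, then $\langle\eb_j,\gb_l\rangle$, then $\langle\eb_j,\fb_k\rangle$). Once this is in place, the verification that $U$ intertwines all three bases is routine and mirrors the pair case.
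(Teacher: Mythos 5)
Your proof is correct, and for the substantive direction it takes a genuinely different route from the paper in how the third basis is handled. The paper fixes $l=1$ to see that the $(X,Y)$-slice determines the Hadamard matrix $H_{X,Y}$ up to diagonal equivalence, invokes Proposition \ref{pairdu} to get a unitary $U$ intertwining the first two bases, and then proves $UR_lU^*=\tilde R_l$ by an operator-algebraic argument: the $d^2$ products $A_{j,k}=P_jQ_k$ form a Hilbert--Schmidt-orthogonal basis of $M_d(\CC)$, and both $UR_lU^*$ and $\tilde R_l$ have the same coefficients $\frac{1}{\sqrt{d^3}}C_{j,k,l}$ against it. You instead gauge-fix the phases of all three bases simultaneously (your three normalization steps are consistent and non-interfering, as you note) and then bootstrap all pairwise inner products $\langle\fb_k,\gb_l\rangle$, $\langle\gb_l,\eb_j\rangle$, $\langle\eb_j,\fb_k\rangle$ explicitly out of the cube by specializing $j=1$, $k=1$, $l=1$ in turn; the unitary then matches all three bases vector-by-vector. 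Your argument is more elementary and self-contained --- it avoids the computation \eqref{eq:A_jk} and in effect re-proves the relevant part of Proposition \ref{pairdu} along the way, while also making transparent exactly how the individual inner products are encoded in the cyclic triple products. What the paper's choice buys is reusable machinery: the orthogonal operator basis $\{P_jQ_k\}$ is exactly what is needed again in Proposition \ref{misi} to construct the third basis from an abstract Hadamard cube, which your vector-level argument would not directly supply.
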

\begin{proof}
One direction is trivial: if the MUB-triplets are directly unitary equivalent, they give rise to the same Hadamard cube. 

\medskip

For the other direction, assume that the cubes $C$ and $\tilde{C}$ determined by the MUB-triplets in question are the same. By fixing $l=1$, we obtain
$$
\langle \eb_j, \fb_k\rangle 
\langle \fb_k, \gb_1\rangle
\langle \gb_1, \eb_j\rangle 
= 
\langle \tilde{\eb}_j, \tilde{\fb}_k\rangle 
\langle \tilde{\fb}_k, \tilde{\gb}_1\rangle
\langle \tilde{\gb}_1, \tilde{\eb}_j\rangle \ \ \ \ \ \forall j,k\in [d],
$$
showing that $D_1HD_2 = \tilde{D}_1\tilde{H}\tilde{D}_2$ where $H,\tilde{H}$ are the Hadamard matrices associated to the pairs $((\eb_j)_{j=1}^d,(\fb_j)_{j=1}^d$ and
$((\tilde{\eb}_j)_{j=1}^d,(\tilde{\fb}_j)_{j=1})$ respectively, and $D_1,D_2,\tilde{D}_1,\tilde{D}_2$
are the diagonal matrices with diagonal entries 
$(\langle \gb_1, \eb_j\rangle)_{j=1}^{d}$,
$(\langle \fb_k, \gb_1\rangle)_{k=1}^{d}$,
$(\langle \tilde{\gb}_1, \tilde{\eb}_j\rangle)_{j=1}^{d}$,
and 
$(\langle \tilde{\fb}_k, \tilde{\gb}_1\rangle)_{k=1}^{d}$, respectively) Thus, by proposition \ref{pairdu}, there exists a unitary $U$ which makes the pairs $((\eb_j)_{j=1}^d,(\fb_j)_{j=1}^d)$ and
$((\tilde{\eb}_j)_{j=1}^d,(\tilde{\fb}_j)_{j=1})$ directly unitary equivalent. We only need to prove that $U$ also connects the third elements of the two triplets; i.e.\ that for the projections $R_l=|\gb_l\rangle\langle\gb_l|$ and 
$\tilde{R}_l=|\tilde{\gb}_l\rangle\langle\tilde{\gb}_j|$ we have 
$UR_lU^* =\tilde{R}_l$,
for all $j\in [d]$.

Recall that the set of matrices $M_d(\CC)$ forms an inner product space with the {\it Hilbert-Schmidt} inner product $\langle A,B\rangle_{H}\equiv {\rm Tr}(A^*B)$ and consider the $d^2$ matrices 

\begin{equation}\label{pq}
A_{j,k} := | \eb_j\rangle\langle\eb_j|
\fb_k\rangle\langle\fb_k|=P_jQ_k\;\;\;\;
(j,k\in [d]).    
\end{equation}
Using the MUB-property, direct computation shows that 
\begin{equation}
\label{eq:A_jk}
\langle A_{j,k},A_{j',k'}\rangle_{H}
 = \delta_{j,j'}\delta_{k,k'}
 \frac{1}{d}\;\;\;\; (j,j',k,k'\in [d]),    
\end{equation}
i.e.\ that these $d^2$ matrices form an orthogonal basis of $M_d(\CC)$, normalized to $1/\sqrt{d}$. Clearly, the same
is true regarding the (possibly different) set of $d^2$ matrices $$\tilde{A}_{j,k}: = UA_{j,k}U^* = | \tilde{\eb}_j\rangle\langle\tilde{\eb}_j|
\tilde{\fb}_k\rangle\langle\tilde{\fb}_k|\;\;\;\; (j,j',k,k'\in [d]).$$  
Then for every $j,k,l\in [d]$,
\begin{eqnarray}
\nonumber
\langle UR_lU^*,\tilde{A}_{j,k}\rangle_{H}
&=& \langle UR_lU^*,UA_{j,k}U^*\rangle_{H}
=\langle R_l,A_{j,k}\rangle_{H} \\
&=&{\rm Tr}(R_lA_{j,k})=
\langle \gb_l, \eb_j\rangle 
\langle \eb_j, \fb_k\rangle 
\langle \fb_k, \gb_l\rangle
=\frac{1}{\sqrt{d^3}}C_{j,k,l}.
\end{eqnarray}
We similarly  obtain that 
$$
\langle \tilde{R}_l,\tilde{A}_{j,k}\rangle_{H} ={\rm Tr}(\tilde{R}_l\tilde{A}_{j,k})=
\langle \tilde{\gb}_l, \tilde{\eb}_j\rangle 
\langle \tilde{\eb}_j, \tilde{\fb}_k\rangle 
\langle \tilde{\fb}_k, \tilde{\gb}_l\rangle
=\frac{1}{\sqrt{d^3}}\tilde{C}_{j,k,l}=\frac{1}{\sqrt{d^3}}C_{j,k,l};
$$
i.e.\ $UR_lU^*$ and $\tilde{R}_l$
have the same inner product with all elements of the basis $(\tilde{A}_{j,k})_{(j,k\in[d])}$,
and hence $UR_lU^*= \tilde{R}_l$.
\end{proof}

Next we investigate the redundancies in the properties (i)-(iv) of Proposition \ref{hcube}. Namely, we show that a set of weaker assumptions on a cube $C$ is sufficient to imply that all of the properties (i)-(iv) hold. In the same proof we also show that properties (i)-(iv)  guarantee that $C$ is associated with an MUB-triplet. 

\medskip

Before we proceed to our formal proposition, we make some remarks on the defining properties of Hadamard cubes. First, note that given any three complex Hadamard matrices $H,\tilde{H}$ and $\tilde{\tilde{H}}$, the product 
\begin{equation}\label{gencube}
C_{j,k,l}:= H_{j,k} \tilde{H}_{k,l}
\tilde{\tilde{H}}_{l,j}
\end{equation}
satisfies  properties (i)-(iii) of Proposition \ref{hcube} automatically, while property (iv) is not necessarily satisfied. This shows that the last property does not follow from the previous ones. It is also quite clear that neither property (ii), nor property (iii) is a consequence of the other ones. So instead of ``dropping'' either of them, 
we shall consider a weaker form of 
each. For example, by property (ii), any pair of one-dimensional piercings contained in a 2 dimensional slice, as elements of $\mathbb C^d$, are orthogonal. However, it is considerably weaker to ask for parallel 2-dimensional slices, as elements of $\mathbb C^{d^2}$, to be orthogonal.
Also, by $(iii)$ it 
follows that if we consider a 
sub-cuboid of our Hadamard cube and mark its vertices with black and white in a checkered manner, then the product of the four ``black'' entries will coincide the with the product of its four ``white'' entries. That is, for any triplets of indices $j,k,l$ and $j',k',l'$
we have the following {\it Haagerup condition}
\begin{equation}
\label{black_and_white}
C_{j',k,l} C_{j,k',l} C_{j,k,l'} C_{j',k',l'} = C_{j,k,l} C_{j',k',l} C_{j,k',l'} C_{j',k,l'}.
\end{equation}
After this preparation we can describe a minimal set of requirements for a cube to be a Hadamard cube. 

\begin{proposition}\label{misi}
Assume $C=(C_{j,k,l})_{j,kl,\in [d]}$ is an array of complex numbers satisfying the following conditions:
\begin{itemize}
 \item[i')] its entries are normalized: $\forall j,k,l\in[d]$: $|C_{j,k,l}|=1$,
\item[ii')] at least one of its faces -- which we will call the ``bottom face'' -- is a complex Hadamard matrix, and the $d$ slices parallel to the bottom face are pairwise orthogonal, as elements of $\mathbb C^{d^2}$, 
\item[iii')] for any triplets of
indices $j,k,l$ and $j',k',l'$ equation (\ref{black_and_white}) holds,
\item[iv')]
entries in any one-dimensional ``horizontal'' cross-section (i.e.\ one parallel to the bottom face) 
sum to $\sqrt{d}$.
\end{itemize}
Then $C$ is a Hadamard cube, i.e. it satisfies properties $(i-iv)$ of Proposition \ref{hcube}. Also, there exists an MUB-triplet $(X, Y, Z)$ such that $C$ is the Hadamard cube associated with $(X, Y, Z)$.
\end{proposition}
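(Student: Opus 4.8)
My plan is to reconstruct an MUB-triplet directly from $C$ and then appeal to Proposition \ref{hcube}: as soon as we produce a genuine MUB-triplet whose associated Hadamard cube is $C$, all of (i)--(iv) follow at once, so the whole content lies in the reconstruction, and (i')--(iv') are exactly the hypotheses that make it run. First I would recover property (iii) from the Haagerup condition (iii'). For two slices at levels $l,l'$ parallel to the bottom face, the entrywise ratios $C_{j,k,l'}/C_{j,k,l}$ (defined by (i')) satisfy, as a direct consequence of \eqref{black_and_white}, the multiplicative relation $(C_{j,k,l'}/C_{j,k,l})(C_{j',k',l'}/C_{j',k',l})=(C_{j',k,l'}/C_{j',k,l})(C_{j,k',l'}/C_{j,k',l})$, which forces the ratio to factor as $\alpha_j\beta_k$ with $|\alpha_j|=|\beta_k|=1$. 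Taking $l'=1$ and writing $h:=C_{\cdot,\cdot,1}$ gives unit phases $\phi^{(l)}_j,\psi^{(l)}_k$ with $C_{j,k,l}=\phi^{(l)}_j\psi^{(l)}_k h_{j,k}$; since \eqref{black_and_white} is symmetric in the three axes this is property (iii) in every direction, and because $h$ is a complex Hadamard matrix by (ii') all slices parallel to the bottom are as well.

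Next I would realize the triplet concretely. Take $X=(\eb_j)$ to be the standard basis and $Y=(\fb_k)$ the normalized columns of $h$; by (ii') this is an MUB-pair, whence the operators $A_{j,k}=P_jQ_k$ form an orthogonal basis of $M_d(\CC)$ as recorded in \eqref{eq:A_jk}. Guided by the proof of Proposition \ref{prop:triplet_equiv}, I set $R_l:=\tfrac1{\sqrt d}\sum_{j,k}\overline{C_{j,k,l}}\,A_{j,k}$. Plugging in the factorization from the previous step together with $(\fb_k)_j=h_{j,k}/\sqrt d$ collapses $R_l$ into the rank-one form $R_l=\tfrac1d|u^{(l)}\rangle\langle v^{(l)}|$ with $u^{(l)}=\sum_j\overline{\phi^{(l)}_j}\,\eb_j$ and $v^{(l)}=\sum_k\psi^{(l)}_k\fb_k$, both of norm $\sqrt d$. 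Such an operator is a rank-one orthogonal projection precisely when $u^{(l)}=v^{(l)}$, and comparing coordinates this identity is exactly the horizontal piercing-sum $\sum_k\psi^{(l)}_k h_{j,k}=\sqrt d\,\overline{\phi^{(l)}_j}$ supplied by (iv'); so $R_l=|\gb_l\rangle\langle\gb_l|$ for the unit vector $\gb_l:=u^{(l)}/\sqrt d$.

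The decisive step is to prove that $Z=(\gb_l)$ is orthonormal, i.e.\ that the overlaps $\Phi_{l,l'}:=\sum_j\phi^{(l)}_j\overline{\phi^{(l')}_j}$ vanish for $l\neq l'$ (note $\langle\gb_l,\gb_{l'}\rangle=\Phi_{l,l'}/d$). This I expect to be the main obstacle, because the slice-orthogonality hypothesis (ii') only delivers the weaker statement that the product $\bigl(\sum_j\overline{\phi^{(l)}_j}\phi^{(l')}_j\bigr)\bigl(\sum_k\overline{\psi^{(l)}_k}\psi^{(l')}_k\bigr)$ of a $j$-overlap and a $k$-overlap vanishes, rather than each factor separately. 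To split the product I would invoke the remaining piercing-sum from (iv'), $\sum_j\phi^{(l)}_j h_{j,k}=\sqrt d\,\overline{\psi^{(l)}_k}$, to rewrite the $k$-overlap $\sum_k\overline{\psi^{(l)}_k}\psi^{(l')}_k$ as a double sum over $j,j'$ and collapse it using the row-orthogonality of the Hadamard matrix $h$; the outcome is that this $k$-overlap equals $\Phi_{l,l'}$, while the $j$-overlap appearing in the product is its conjugate $\overline{\Phi_{l,l'}}$. The vanishing product is therefore $|\Phi_{l,l'}|^2$, forcing $\Phi_{l,l'}=0$. Hence $\sum_l R_l=I$ and $Z$ is an orthonormal basis.

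Finally, the two representations $\gb_l=\tfrac1{\sqrt d}\sum_j\overline{\phi^{(l)}_j}\,\eb_j$ and $\gb_l=\tfrac1{\sqrt d}\sum_k\psi^{(l)}_k\fb_k$ show at a glance that $|\langle\gb_l,\eb_j\rangle|=|\langle\gb_l,\fb_k\rangle|=1/\sqrt d$, so $(X,Y,Z)$ is an MUB-triplet, and evaluating $\sqrt{d^3}\,\langle\eb_j,\fb_k\rangle\langle\fb_k,\gb_l\rangle\langle\gb_l,\eb_j\rangle$ returns $\phi^{(l)}_j\psi^{(l)}_k h_{j,k}=C_{j,k,l}$, so its associated Hadamard cube is exactly $C$. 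Applying Proposition \ref{hcube} to this MUB-triplet then upgrades the hypotheses (i')--(iv') to the full list (i)--(iv), proving both that $C$ is a Hadamard cube and that it is realized by an MUB-triplet.
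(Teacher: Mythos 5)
Your reconstruction is correct, and it reaches the goal by a genuinely different route from the paper. The paper also builds the pair $(X,Y)$ from the bottom face and defines $R_l$ as a combination of the operators $A_{j,k}=P_jQ_k$, but it then works entirely at the operator level: idempotency $(R_l^*)^2=R_l^*$ is obtained through a fairly long index-reshuffling computation based on the product formula ${\rm Tr}(A_{j,k}A_{j',k'})=\frac{1}{d^2}\,C_{j,k,d}C_{j',k',d}/(C_{j',k,d}C_{j,k',d})$ combined with the Haagerup condition and (iv'), and self-adjointness is proved separately by comparing Hilbert--Schmidt products of $R_l$ and $R_l^*$ against the basis $(A_{j,k}^*)$. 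You instead cash in the Haagerup condition (iii') at the very start, deducing that every horizontal slice factors as $C_{j,k,l}=\phi^{(l)}_j\psi^{(l)}_k h_{j,k}$; this makes $R_l$ manifestly of the rank-one form $\frac1d|u^{(l)}\rangle\langle v^{(l)}|$, and the piercing sums (iv') then force $u^{(l)}=v^{(l)}$, so $R_l$ is a projection with essentially no computation. Your treatment of the orthonormality of $Z$ is the one place where real care is needed, and you handle it correctly: (ii') only gives the vanishing of the product of the $\phi$-overlap and the $\psi$-overlap, and you use the second family of piercing sums together with the row-orthogonality of $h$ to identify the two factors as complex conjugates of each other, so that the vanishing product is $|\Phi_{l,l'}|^2$ (this is the coordinate form of the paper's observation that ${\rm Tr}(R_lR_{l'})=|\langle\gb_l,\gb_{l'}\rangle|^2$ once the $R_l$ are known to be projections). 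Both arguments use the hypotheses in the same essential roles; what yours buys is a shorter verification that the $R_l$ are projections and an explicit coordinate description of the third basis, while the paper's stays uniform with the operator-basis framework it set up for Proposition \ref{prop:triplet_equiv}.
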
 

\begin{proof}
It is enough to show the last statement, i.e. that there exists an MUB-triplet associated to the cube $C$. Properties (i)-(iv) of Proposition \ref{hcube} will then follow automatically. 

\medskip

We can pick an arbitrary orthonormal basis $(\eb_j)_{j\in [d]}$ to be the first element of our MUB-triplet. To fix notations, let us say that the ``bottom face'' of $C$ is the one corresponding to the indeces $\{(j,k,l)\in [d]^3\,|\, l=d\}$. We then use this face to  define the second term of our MUB triplet; let
$\fb_j:= \frac{1}{\sqrt{d}} \sum_{k=1}^d C_{k,j,d}\eb_k$. The fact that $C_{\cdot,\cdot,d}$ is a complex Hadamard matrix implies that 
$(\fb_j)_{j\in [d]}$ is an orthonormal basis. Moreover, we have that $\langle \eb_k,\fb_j\rangle =\frac{1}{\sqrt{d}} C_{k,j,d}$; in particular, $\left((\eb_j)_{j\in [d]},(\fb_j)_{j\in [d]}\right)$
is an MUB pair. The  problem is to find a suitable third basis to form an MUB-triplet.  

\medskip

Let us look at this problem not in term of basis {\it vectors}, but -- as we have done before -- in terms of rank one orthogonal projections. Let $P_k:=|\eb_k\rangle\langle\eb_k|, Q_k:= |\fb_k\rangle\langle\fb_k| $ ($k\in [d]$) and $A_{j,k}:=P_jQ_k$ $(j,k\in [d])$ as in the proof of Prop.\ \ref{prop:triplet_equiv}. Instead of the vectors of the third basis, -- whose ``phases'', in any case, are not determined by our cube -- we need to find $d$ orthogonal projections $R_k$ $(k\in [d])$ such that
\begin{itemize}
\item[(a)] ${\rm Tr}(R_k R_j)=\delta_{k,j}$; this ensures that they are all rank one and are pairwise orthogonal -- i.e.\ they correspond to an ONB,
\item[(b)] ${\rm Tr}(P_k R_j)={\rm Tr}(Q_k R_j)=1/d$ for all $k,j\in [d]$; to ensure that the three bases form a MUB-triplet, and
\item[(c)] ${\rm Tr}(P_k Q_jR_l)= 1/\sqrt{d^3} \, C_{j,k,l}$ for all $j,k,l\in [d]$; to ensure that $C$ is indeed the Hadamard-cube corresponding to the constructed MUB-triplet.
\end{itemize}

As was already noted before, the matrices $A_{j,k}=P_jQ_k$ $(j,k\in [d])$ defined in \eqref{pq} form an orthogonal basis of $M_d(\CC)$, normalized to $1/\sqrt{d}$. It follows that with the definition
$$
R_l:=\frac{1}{\sqrt{d}}\sum_{j,k \in [d]}C_{j,k,l}\, A_{j,k}^* ={\sqrt{d}} \sum_{j,k\in [d]}C_{j,k,l}\, Q_kP_j\;\;\;
(l\in[d])
$$
we have that
$${\rm Tr}(P_j Q_kR_l)= 
{\rm Tr}(A_{j,k}R_l)=
\langle A_{j,k}^*,R_l\rangle_{H} = \frac{1}{\sqrt{d^3}} \, C_{j,k,l},$$
as required by point $(c)$. Moreover, we have 
\begin{eqnarray}
\nonumber
{\rm Tr}(Q_k R_l) &=& \frac{1}{\sqrt{d}}
\sum_{j',k'\in [d]}
 C_{j',k',l}
{\rm Tr}(Q_k A^*_{j',k'})=
\frac{1}{\sqrt{d}}
\sum_{j',k'\in [d]}
 C_{j',k',l}
{\rm Tr}(Q_k Q_{k'}P_{j'})
\\
\nonumber
&=&
\frac{1}{\sqrt{d}}
\sum_{j'\in [d]}
C_{j',k,l}
{\rm Tr}(Q_k P_{j'}) =
\frac{1}{d\sqrt{d}}
\sum_{j'\in [d]}
C_{j',k,l} =\frac{1}{d},
\end{eqnarray}
where the last equality follows by assumption (iv'). Similar computation shows that 
${\rm Tr}(P_k R_l)$ is also equal to $1/d$. Therefore,  point $(b)$ above is also satisfied.

\medskip

Since we have the coefficients of $R_l$ $R_{l'}$ in an orthogonal basis of $M_d(\CC)$, it is easy 
to compute the Hilbert-Schmidt product:
$$
\langle R_l,R_{l'}\rangle_{H}=
\frac{1}{d^2}\sum_{j,k} 
\overline{C}_{j,k,l} C_{j,k,l'} =
\delta_{l,l'},
$$
where we have used assumptions (i') and (ii'). Thus, ${\rm Tr}(R_l^*R_{l'})=\delta_{l,l'}$; which is almost what point $(a)$
asks for. What remains to be shown is that each $R_l$ is in fact an orthogonal projection; i.e.\ that
$R_l^* = R_l=R_l^2$. In order to proceed, let us note that
\begin{eqnarray}
\nonumber
A_{j,k}A_{j',k'} &=& 
|\eb_j\rangle\langle \eb_j|
\fb_k\rangle\langle \fb_k|
\eb_{j'}\rangle\langle \eb_{j'}|
\fb_{k'}\rangle\langle \fb_{k'}| 
=
\frac{\langle \eb_j,
\fb_k\rangle\langle \fb_k,
\eb_{j'}\rangle\langle \eb_{j'},
\fb_{k'}\rangle}{\langle \eb_j,\fb_{k'}\rangle}
\,
|\eb_j\rangle\langle \eb_{j}
|\fb_{k'}\rangle\langle \fb_{k'}|
\\
\nonumber
&=&
\frac{1}{d}\,
\frac{C_{j,k,d} \overline{C_{j',k,d}} C_{j',k',d}}{ C_{j,k',d}}
\,
A_{j,k'}
=
\frac{1}{d}\,
\frac{C_{j,k,d} C_{j',k',d}}{ 
C_{j',k,d}
C_{j,k',d}}
\,
A_{j,k'},
\end{eqnarray}
where we have used that for unit complex numbers conjugation is equivalent to taking reciprocals.
In particular, 
\begin{equation}
\label{eq:TrAA}
{\rm Tr}(A_{j,k}A_{j',k'})=
\frac{1}{d}\,
\frac{C_{j,k,d} C_{j',k',d}}{ 
C_{j',k,d}
C_{j,k',d}}
{\rm Tr}(P_jQ_{k'}) =
\frac{1}{d^2}\,\frac{C_{j,k,d} C_{j',k',d}}{ 
C_{j',k,d}
C_{j,k',d}}.
\end{equation}

We can now show that $R_l$ is an orthogonal projection. We have that
\begin{eqnarray}
\nonumber
(R_l^*)^2 &=& \left(\frac{1}{\sqrt{d}}\sum_{j,k \in [d]}\overline{C_{j,k,l}}\, A_{j,k}\right)^2 = 
\frac{1}{d}
\sum_{j,k,j',k'\in [d]}
\overline{C_{j,k,l}}\,
\overline{C_{j',k',l}}
A_{j,k}A_{j',k'}
\\
\nonumber
&=&
\frac{1}{d^2}
\frac{C_{j,k,d} \, C_{j',k',d}}{ 
C_{j,k,l}
C_{j',k',l} C_{j,k',d} C_{j',k,d}}
A_{j,k'} = 
\frac{1}{d^2}
\frac{C_{j,k,d} \, C_{j',k',d}}{ 
C_{j',k,l}
C_{j,k',l} C_{j,k,d} C_{j',k',d}}
A_{j,k'}
\\
\nonumber
&=&
\frac{1}{d^2}
\sum_{j,k,j',k'\in [d]}
\overline{C_{j',k,l}}\,
\overline{C_{j,k',l}}
A_{j,k'} = 
\left(\frac{1}{\sqrt{d^3}}
\sum_{j',k\in [d]}
\overline{C_{j',k,l}} \right)
\left(\frac{1}{\sqrt{d}}
\sum_{j,k'\in [d]}
\overline{C_{j,k',l}}
A_{j,k'} \right)
\\
\nonumber
&=&
\left(\frac{1}{\sqrt{d^3}}
\sum_{k\in [d]}
\sqrt{d}
\right) R_l^*
=R_l^*,
\end{eqnarray}
where from the second line to the third we used assumption (iii') -- i.e. the relation (\ref{black_and_white}) -- to ``reshuffle'' indices, and assumption (iv') from the third line to the fourth. Thus, the only thing that remains to be proven is the self-adjointness
of $R_l$.

To show that $R_l$ and $R_l^*$
coincide, it is enough to check that their Hilbert-Schmidt product with each $A_{j,k}^*$ ($j,k\in [d]$) coincide since the latter operators form an orthogonal basis of $M_d(\CC)$. 
We already know that
$\langle A_{j,k}^*,R_l\rangle_{H} = {\rm Tr}(P_j Q_k R_l)=1/\sqrt{d^3} C_{j,k,l}$. Also,
\begin{eqnarray}
\nonumber
\langle A_{j,k}^*,R_l^*\rangle_{H} &=& \frac{1}{\sqrt{d}}
\sum_{j',k'\in [d]} \overline{C_{j',k',l}}\,
{\rm Tr}(A_{j,k} A_{j',k'})=
\frac{1}{d^2\sqrt{d}}
\sum_{j',k'\in [d]} 
\frac{C_{j,k,d}\, C_{j',k',d}}{ 
C_{j',k',l} C_{j',k,d} C_{j,k',d}}
\\
\nonumber
&=&
\frac{1}{d^2\sqrt{d}}
\sum_{j',k'\in [d]} 
C_{j,k,l}
\overline{C_{j',k,l}}\, \overline{C_{j,k',l}}
=\frac{1}{d^2\sqrt{d}}
C_{j,k,l}
\sum_{j'\in [d]} 
\overline{C_{j',k,l}}\, 
\sum_{k'\in [d]} 
\overline{C_{j,k',l}}
\\
\nonumber
&=&
\frac{1}{\sqrt{d^3}}
C_{j,k,l}
\end{eqnarray}
where again from the first line to the second we used relation
(\ref{black_and_white}), and from 
the second line to the third assumption (iv'). The proof is now complete. 
\end{proof}

To end this section we give a natural generalization of the notion of Hadamard cubes. Notice that the defining equations (i)-(iv) in Proposition~\ref{hcube1} include the operation of conjugation. This is so, because the condition $|C_{j,k,l}|=1$ can only be expressed as $C_{j,k,l}\overline{C_{j,k,l}}=1$. However, keeping in mind Hilbert's Nullstellensatz, it could be advantageous to express all properties of the cube as polynomials (or, at least, rational functions)  of its entries. Therefore, we introduce the notion of {\it inverse orthogonal cubes} as follows. 

\begin{definition}\label{io}
A $d\times d\times d$ array $C$ of complex numbers is called an {\it inverse orthogonal cube}, if the following are satisfied: 
\begin{itemize}
 \item[a)] All 2-dimensional slices of $C$ are inverse-orthogonal matrices, i.e. for any two distinct column (or row) $u, v$ of a slice we have $\sum_{j=1}^d u_j/v_j=0$,
\item[b)] for any choices of indies $j, k, l$ and $j', k', l'$ we have the Haagerup condition \eqref{black_and_white},
\item[c)] for all 1-dimensional cross-sections of $C$ sum to $\sqrt{d}$, that is, $\sum_{j=1}^d C_{j,k,l}=\sqrt{d}$ for any fixed $k,l\in[d]$, and similarly for piercings of the other two directions.
\end{itemize}
\end{definition}

\medskip

We hope to use these polynomial equations to deduce non-trivial conclusions about Hadamard cubes.  

\section{Hadamard cubes in dimension 6}\label{sec4}

In this section we turn to the special case $d=6$, the lowest dimension where the maximal number of MUBs is not known. 

\medskip

Fairly strong numerical evidence suggests that the maximal number of MUBs in dimension 6 is three. Instead of trying to prove the non-existence of a quadruple of MUBs, it is natural to try to characterize all triplets of MUBs. If we succeed in doing so, the non-existence result for quadruples may follow automatically as a consequence of previous results in the literature. Following this strategy, we will formulate some conjectures about $6\times 6\times 6$  Hadamard cubes based on numerical evidence, and outline how these conjectures could lead to the proof of Zauner's conjecture that the maximal number of MUBs in dimension 6 is three.

\medskip

We begin by recalling a few facts about complex Hadamard matrices in dimension 6. While a complete characterization is still missing, it is believed that complex Hadamard matrices in dimension 6 form a connected 4-parameter manifold, and a single isolated point outside this manifold \cite{generic}. An online catalogue of complex Hadamard matrices is maintained at \cite{cat}.

Let us recall some notable 2-parameter families of complex Hadamard matrices here. 

\medskip

The {\it transposed Fourier family} of complex Hadamard matrices of order 6 is the two-parameter family
\begin{equation}\label{eq:fourier}
F^T(x,y)=\left[
		\begin{array}{cccccc}
			1 & 1 & 1 & 1 & 1 & 1 \\
			1 & \om^2 & \om & x &  \om^2 x& \om x \\
			1 & \om & \om^2 & y &  \om y& \om^2y \\
			1 & 1 & 1 & -1 & -1 & -1 \\
			1 & \om^2 & \om & -x &  -\om^2x& -\om x \\
			1 &  \om& \om^2 & -y &  -\om y& -\om^2 y
		\end{array}\right]
\end{equation} where $\om=e^{2i\pi/3}$, and $x, y$ are arbitrary complex numbers of unit modulus.

\medskip

In standard terminology, it is more customary to flip the rows and columns  and  define the {\it Fourier family}  $F(x, y)$ as the transposed of \eqref{eq:fourier}. However, for the discussion below, it is more convenient for us to display the family $F^T(x,y)$ above.  

\medskip

We will also need the {\it Sz\"oll\H osi} family $X(\alpha)$ described in \cite{Sz}, whose elements can be represented in the form

\begin{equation}\label{eqsz}
S=\left[
	\begin{array}{cccccc}
		a & b & c & d & e & f \\
		c & a & b & f & d & e \\
		b & c & a & e & f & d \\
		\conj{d} & \conj{f} & \conj{e} & -\conj{a} & -\conj{c} & -\conj{b} \\
		\conj{e} & \conj{d} & \conj{f} & -\conj{b} &  -\conj{a}& -\conj{c} \\
		\conj{f} & \conj{e}& \conj{d} &  -\conj{c}&  -\conj{b}& -\conj{a}
	\end{array}\right]
\end{equation}

Sz\"oll\H osi \cite{Sz} showed that complex Hadamard matrices of this form constitute -- up to the equivalence relation \eqref{eqhad} -- a two-parameter family $X(\alpha)$, parametrized by a complex parameter $\alpha$. In the notation  \eqref{eqsz} we have suppressed the dependence on $\alpha$, as we will only need the fact that the matrices $S$ have the 2-circulant form above. 

\medskip

It is worth mentioning two known facts here. First, the family $X(\alpha)$ is self-adjoint in the sense that if $S\in X(\alpha)$ then $S^\ast\in X(\alpha)$ (up to equivalence). Second, the  families $F^T(x,y)$ and $X(\alpha)$ intersect each other in the matrix $F^T(\omega,\omega)$. 

\medskip

Let us also recall here the most general known construction of MUB-triplets in dimension 6, based on an idea of Zauner \cite{Za}, and described in full generality by Sz\"oll\H osi in \cite{Sz}. 

\medskip

Given any $6\times 6$ complex Hadamard matrix $H$ in a 2-circulant form 
\begin{equation}\label{hb}
H=
\begin{bmatrix}
    A      & B  \\
    C      & D
\end{bmatrix},
\end{equation}
where $A, B, C$ and $D$ are circulant matrices, $H$ can be written as a product
\begin{equation}\label{hff}
H=F_1^{-1}F_2,
\end{equation}
where $F_1=D_1 F(x_1,y_1)$ and $F_2=D_2F(x_2,y_2)$, with some unitary diagonal matrices $D_1, D_2$, and some members $F(x_1,y_1)$ and $F(x_2,y_2)$ of the Fourier family \eqref{eq:fourier}. As such, every matrix $S\in X(\alpha)$ of the form \eqref{eqsz} can be written as a product
\begin{equation}\label{eq:zauner}
S=F_1^{-1}F_2.
\end{equation} Therefore the matrices $(I, \frac{1}{\sqrt{6}}F_1, \frac{1}{\sqrt{6}}F_2)$ form a 2-parameter family of MUB-triplets (the dependence of $S, F_1$ and $F_2$ on the complex parameter $\alpha$ is suppressed in the notation).

\medskip

The Hadamard cube $C$, associated with such an MUB-triplet $(I, \frac{1}{\sqrt{6}}F_1, \frac{1}{\sqrt{6}}F_2)$, has a very special form. For the orientation of the cube let us assume that horizontal slices are equivalent to $S$. The $j$th horizontal slice, by definition, is given by 
\begin{equation}\label{eq:cjk}
	C_{j,k,l}=(I^* F_1)_{j,k} (F_1^*F_2)_{k,l} (F_2^* I)_{l,j}
\end{equation}

A direct calculation shows 
that each slice $C_{j, \cdot, \cdot}$ consists of four circulant blocks of size $3\times 3$. As such, each slice $C_{j, \cdot, \cdot}$ consists of at most 12 distinct values, and hence the cube $C$ contains at most 72 distinct values, 
Further straightforward calculations show that these 72 values can be paired into 36 conjugate pairs. After a permutation of indices we arrive at a block decomposition of the cube $C$ into mini-cubes of size $2\times 2\times 2$, where the entries at opposite corners of each mini-cube are conjugates of each other. This is shown in Figure \ref{fig_cube}. (Note that  any value inside the cube can be filled in by equation \eqref{black_and_white}.) We call such a  cube $C$ {\it generic} if the 36 conjugate pairs of numbers appearing in it are all distinct.

\medskip

The other example of a 2-circulant matrix in \ref{hb} is when 
$$
A=B=C=\begin{bmatrix}
    \omega      & 1  & 1\\
    1      & \omega  & 1 \\
    1     & 1 & \omega
\end{bmatrix},
$$
and $D=-A$. In that case, $H$ defined in \ref{hb} is equivalent to the standard Fourier matrix $F_6(1,1)$ (in the notation of \ref{eq:fourier}), and the matrices $F_1, F_2$ in the construction \ref{hff} are also equivalent to $F_6(1,1)$. Furthermore, direct calculation shows that the construction \ref{hff}  leads to an  MUB-triplet $(X_1, X_2, X_3)$ such that each of the three transition matrices $X_jX_k^\ast$ are equivalent to the matrix $\frac{1}{\sqrt{6}}F(1, 1)$, and the associated Hadamard cube $C$ consists of 24th roots of unity exclusively. We call the arising cube {\it exceptional}, and remark that it is unique up to  permutations of indices,
and applying conjugation to all elements. 

\medskip

We now turn to describing the results of some numerical experiments that we have conducted in connection with MUB-triplets in dimension 6.

\medskip

Following common methodology \cite{raynal, BBELTZ, BWB} we formulate the numerical search for MUB-triplets as a continuous optimization task. The loss function we use to quantify the mutually unbiased properties of a set of matrices is the simple unweighted sum of two terms: one quantifying orthogonality, the other quantifying unbiasedness:

$$\sum_{1 \leq i \leq 3} \sum_{1 \leq k,l \leq 6} |(X_i^* X_i - I)_{kl}|^2 + \sum_{1 \leq i < j \leq 3} \sum_{1 \leq k,l \leq 6} (|(X_i^* X_j)_{kl}| - \frac{1}{\sqrt 6})^2,$$

This loss function is nonnegative, and it is zero if and only if all the MUB properties are satisfied. We use gradient-based optimization to find minima of this loss function, starting the optimization from three random  unimodular matrices (scaled by a factor of $\frac{1}{\sqrt{6}}$). The loss function is non-convex, so the optimization process can get stuck at local minima.

\medskip

After repeating the above process appropriately, we have obtained a collection of $20000$ numerical MUB-triplets $(X_1, X_2, X_3)$.
We summarize the results of this numerical experiment below. 

\begin{figure}
  \centering

\input{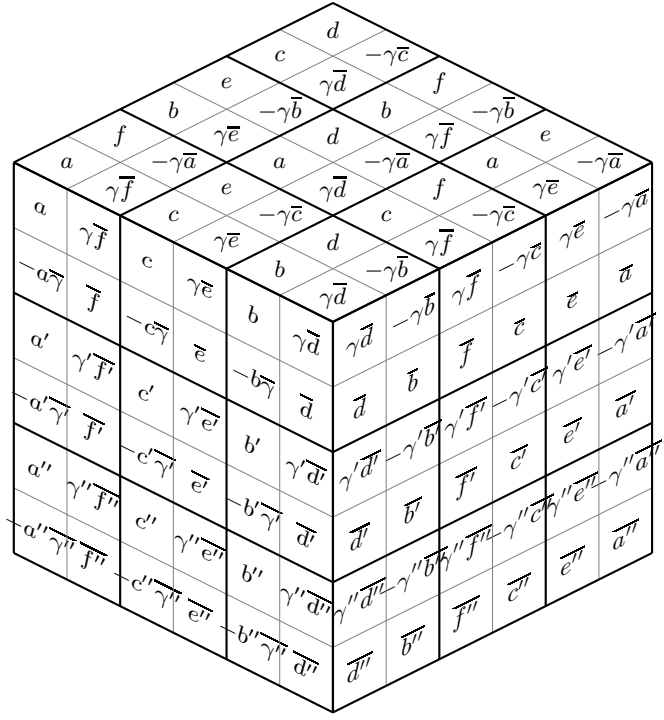}

\caption{Figure demonstrating the block decomposition into $3 \times 3 \times 3$ mini-cubes of shape $2 \times 2 \times 2$. }
\label{fig_cube}
\end{figure}

\begin{observation}\label{empiricallyzauner}
Up to numerical tolerance, each of our numerical MUB-triplets $\{X_1, X_2, X_3\}$ is permutationally unitary equivalent to an MUB-triplet obtained by the construction \eqref{hff} above. In terms of statistics, for approximately 50$\%$ of the random initializations the optimization process
does not reach loss 0, and hence we do not find a numerical MUB-triplet. For about 43$\%$ of the cases the optimization process converges to a numerical MUB-triplet which gives rise to a {\it generic} cube $C$. Finally, for about 7$\%$ of the cases the optimization process converges to a numberical MUB-triplet which gives rise to the {\it exceptional} cube $C$. 
\end{observation}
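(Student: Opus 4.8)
The statement is an \emph{empirical} observation about a finite collection of numerically computed triplets, so by a ``proof'' I mean the theoretical reductions that make the claim checkable, together with the verification procedure they license. The plan is to turn the assertion ``each numerical triplet is permutationally unitary equivalent to a construction-\eqref{hff} triplet'' into a statement purely about Hadamard cubes, and then to certify that statement on the data. First I would invoke Proposition~\ref{prop:triplet_equiv}: the cube \eqref{hcube1} is a \emph{complete} invariant for direct unitary equivalence. To upgrade this to permutational equivalence (Definition~\ref{pu}) I would work out the induced action on cubes. Permuting the vectors inside the three bases by $\sigma_1,\sigma_2,\sigma_3\in S_6$ permutes the indices of $C$ along the three axes, $C_{j,k,l}\mapsto C_{\sigma_1(j),\sigma_2(k),\sigma_3(l)}$, while permuting the three bases by $\pi\in S_3$ permutes the three axes, with odd permutations additionally applying entrywise complex conjugation: e.g.\ passing from $(X,Y,Z)$ to $(Y,X,Z)$ sends $C_{j,k,l}$ to $\overline{C_{k,j,l}}$, exactly the $\tilde H^\ast$ phenomenon of Proposition~\ref{pairpu}. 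Hence two triplets are permutationally unitary equivalent if and only if their cubes lie in a single orbit of $G=(S_6)^3\rtimes S_3$ acting in this way.

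Next I would record, once and for all, the cube of a generic construction-\eqref{hff} triplet. As computed in \eqref{eq:cjk} and displayed in Figure~\ref{fig_cube}, such a cube decomposes into $2\times2\times2$ mini-cubes with conjugate opposite corners, its horizontal slices are equivalent to the $2$-circulant Szöll\H osi matrices $S\in X(\alpha)$, and the diagonal phase data of $F_1,F_2$ drop out; so the cube is determined by the single complex Szöll\H osi parameter. The exceptional cube is the isolated member coming from $A=B=C$, $D=-A$, with all entries $24$th roots of unity. Thus the \emph{set of target cubes} is a one-complex-parameter family of block-form cubes plus one extra point, and the observation reduces to: every numerical cube lies in the $G$-orbit of a member of this set.

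On the data I would then, for each of the $20000$ triplets, (a) form its cube via \eqref{hcube1}; (b) test, within numerical tolerance, whether some $g\in G$ carries it to the canonical block template of Figure~\ref{fig_cube}; and (c) if so, read the Szöll\H osi parameter off a slice and confirm it gives a genuine $S\in X(\alpha)$, exhibiting the matching construction triplet. The generic/exceptional dichotomy is decided by a cheap $G$-invariant fingerprint: a generic cube has its $72$ entries forming $36$ \emph{distinct} conjugate pairs, whereas the exceptional cube uses only $24$th roots of unity and far fewer distinct values. Tallying outcomes across all runs yields the reported $\approx43\%$ generic and $\approx7\%$ exceptional split, with the remaining $\approx50\%$ being runs that never reached loss $0$; these percentages are simply the experimental frequencies and carry no further content.

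The genuine difficulty is step~(b): the orbit is taken under the enormous group $(S_6)^3\rtimes S_3$, so a brute-force alignment is infeasible. I expect the key idea to be the use of permutation-invariant fingerprints of the cube --- the multiset of entrywise values, the multiset of Haagerup products \eqref{black_and_white} over all sub-cuboids, and the pattern of equal and conjugate entries --- to prune the candidate index permutations down to a handful before attempting an exact match against the template. A secondary but essential obstacle is numerical: since the $X_i$ are only approximate minimizers of the loss, one must fix a principled tolerance for declaring a cube equal to the block template, and check that the classification (and hence the observation) is stable under reasonable variations of that threshold.
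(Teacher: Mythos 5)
There is nothing in the paper to compare your argument against: the statement is explicitly labelled an \emph{empirical observation}, and the paper offers no proof of it --- it simply reports the outcome of $20000$ optimization runs and the subsequent classification of the resulting cubes. Your proposal is a sound and essentially correct account of the theoretical reductions that make such a classification well-posed: the cube of \eqref{hcube1} is a complete invariant for direct unitary equivalence by Proposition~\ref{prop:triplet_equiv}, and your computation of the induced action of $(S_6)^3\rtimes S_3$ on cubes (axis permutations from the $\sigma_i$, axis transpositions with entrywise conjugation from odd permutations of the bases, mirroring the $\tilde H^\ast$ in Proposition~\ref{pairpu}) is correct, so permutational unitary equivalence is indeed orbit equivalence of cubes under this group. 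Your verification plan --- match each numerical cube, up to the group action and a tolerance, against the parametrized block template of Figure~\ref{fig_cube} or the exceptional cube, then exhibit the corresponding construction triplet by reading off the Szöll\H osi parameter and using the factorization \eqref{hff} --- is consistent with what the paper implicitly does, and you correctly identify the two real practical obstacles (pruning the $(6!)^3\cdot 6$ candidate alignments via $G$-invariant fingerprints, and the stability of the classification under the choice of numerical threshold). The one point worth making explicit is that matching the block \emph{pattern} of Figure~\ref{fig_cube} alone does not yet certify membership in the construction family; your step of actually producing $F_1,F_2$ from the recovered slice and comparing cubes via Proposition~\ref{prop:triplet_equiv} is what closes that loop, and it should not be omitted. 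The reported percentages are, as you say, raw experimental frequencies and admit no proof.
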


Upon this numerical evidence, we make the following conjecture. 

\begin{conjecture}\label{mub3}
Any MUB-triplet $(X_1, X_2, X_3)$ in dimension $d=6$ is permutationally unitary equivalent to an MUB-triplet obtained by the construction \eqref{hff}. In particular, there are two essentially different cases:

-- the 2-parameter family of MUB-triplets, described in \cite{Sz}, where the transition matrices $\sqrt{6}X_1X_2^\ast, \sqrt{6}X_2X_3^\ast, \sqrt{6}X_3X_1^\ast$ have the property that one belongs to the Sz\"oll\H osi family $X(\alpha)$, one to the Fourier family $F(x, y)$, and one to the transposed family $F^T(x, y)$

-- an isolated MUB-triplet where all three transition matrices are equivalent to $F_6(1,1)$.

\end{conjecture}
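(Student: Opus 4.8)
The natural framework is to translate the problem entirely into the language of Hadamard cubes. By Proposition~\ref{prop:triplet_equiv}, MUB-triplets in $\C^6$ are, up to direct unitary equivalence, in bijection with $6\times6\times6$ Hadamard cubes, and permutational equivalence of triplets corresponds to the evident action on cubes by index permutations, the three directional symmetries, and the phase torus. Thus the conjecture becomes the purely algebraic statement that every $6\times6\times6$ Hadamard cube is equivalent, in this sense, to one arising from the Zauner--Sz\"oll\H osi construction \eqref{hff}; concretely, that after a permutation of the three directions one of the transition matrices is $2$-circulant, i.e.\ lies (up to the equivalence \eqref{eqhad}) in the Sz\"oll\H osi family $X(\al)$. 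The plan is to prove this structural rigidity statement and then let the factorization \eqref{eq:zauner} reconstruct the whole triplet.

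First I would pass to the description of Definition~\ref{io}: an inverse orthogonal cube is cut out by \emph{polynomial} (after clearing denominators, rational) equations in the entries $C_{j,k,l}$, since the unit-modulus constraint becomes $C\,\overline C=1$ and conjugation becomes inversion. This places the whole problem on an affine variety $\mathcal{V}\subset(\C^{\times})^{216}$ and, as hinted after Definition~\ref{io}, opens the door to Nullstellensatz- and Gr\"obner-type arguments. I would then use the phase and permutation freedom to normalize a spanning skeleton of entries to $1$ (fixing one dephased Hadamard slice and propagating by \eqref{black_and_white}), which eliminates the continuous phase action so that the residual variety is expected to be $2$-dimensional. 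It is worth recording a clean consequence of this setup: writing $C_{j,k,l}=H_{j,k}\tilde H_{k,l}\tilde{\tilde H}_{l,j}$ as in the remark after Proposition~\ref{misi}, the piercing-sum conditions of Proposition~\ref{hcube}(iv) collapse to a matrix identity of the shape $\tilde{\tilde H}\,H\propto\tilde H^{*}$ (up to transposition of indices); that is, the binding constraint is exactly the familiar requirement that any two transition matrices of the triplet determine the third.

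The heart of the argument, and the step I expect to be by far the hardest, is to show that these equations force the block structure of Figure~\ref{fig_cube}: after a suitable permutation of indices the cube decomposes into $2\times2\times2$ mini-cubes with conjugate opposite corners, equivalently that one family of slices is $2$-circulant. I would attack this by combining the two rigid families of constraints that the cube formalism makes available simultaneously: the multiplicative Haagerup relations \eqref{black_and_white}, which write $C$ as the product of its three transition matrices, and the additive piercing sums $\sum_j C_{j,k,l}=\sqrt6$, which are extremely restrictive because they force six unimodular numbers to add up to a fixed positive real. Substituting the product form into the piercing sums yields an over-determined system of exponential-sum equations in the slice entries, whose only solutions should be the $2$-circulant ones. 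Here I would exploit the residual symmetry heavily---the $S_3$ permuting the three directions, the row/column permutations within slices, and the finite phase symmetries---to split $\mathcal{V}$ into strata, and use numerical algebraic geometry (witness sets and monodromy) to \emph{identify} the irreducible components as candidates before certifying rigorously that there are no others. This is precisely where the difficulty of Zauner's conjecture is concentrated: a direct Gr\"obner computation in $216$ variables is hopeless, so success hinges on finding the right change of variables---presumably a discrete Fourier transform adapted to the conjectured $\Z_3$-circulant symmetry---that linearizes enough of the system.

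Finally, once the $2$-circulant structure is in hand the classification is essentially complete: by \cite{Sz} the $2$-circulant Hadamard matrices of order $6$ form, up to equivalence, the family $X(\al)$, and the factorization \eqref{hff}--\eqref{eq:zauner} exhibits the triplet as $(I,\tfrac1{\sqrt6}F_1,\tfrac1{\sqrt6}F_2)$, giving the generic $2$-parameter family. The dichotomy in the statement should then emerge as a stratification in the parameter $\al$: for generic $\al$ one obtains the main $2$-parameter component, whereas the special configuration $A=B=C$, $D=-A$ (where the transition matrices all degenerate to matrices equivalent to $F_6(1,1)$ and the cube becomes the exceptional all-$24$th-root cube) is the isolated point at which the Sz\"oll\H osi and Fourier families meet, namely $F^T(\om,\om)$. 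Checking that this isolated triplet is a genuinely separate component, rather than a limit lying on the $2$-parameter family, is a comparatively routine local computation once the global structure is established.
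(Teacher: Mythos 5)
The statement you are asked to prove is Conjecture~\ref{mub3}; the paper does not prove it. It offers only the numerical evidence recorded in Empirical observation~\ref{empiricallyzauner} (20000 gradient-descent runs, all landing on the generic or exceptional cube), and the entire point of Section~\ref{sec4} is that this classification remains open. So there is no proof in the paper to compare against, and your proposal is not a proof either: it is a research program. Its central step --- that the multiplicative Haagerup relations \eqref{black_and_white} together with the additive piercing sums force, after a permutation of indices, the $2\times2\times2$ block structure of Figure~\ref{fig_cube}, equivalently that one transition matrix is $2$-circulant --- is precisely the content of the conjecture, and you leave it to ``numerical algebraic geometry (witness sets and monodromy)'' followed by an unspecified rigorous certification. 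That certification is the whole problem; nothing in the proposal reduces it. Note also that your observation that the piercing sums collapse to $\tilde{\tilde H}H\propto \tilde H^{*}$ (up to index transposition) is correct but carries no extra leverage: it merely restates that the three transition matrices of a triplet multiply to the identity, which holds automatically for every cube coming from a triplet and so cannot by itself single out the $2$-circulant ones.

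Two smaller inaccuracies are worth flagging. First, you identify the exceptional triplet (all three transition matrices equivalent to $F_6(1,1)$) with the intersection point $F^T(\om,\om)$ of the families $F^T(x,y)$ and $X(\al)$; the paper treats these as distinct facts, and the exceptional cube is characterized by consisting of $24$th roots of unity, not by sitting at that intersection. Second, your assertion that after normalization ``the residual variety is expected to be $2$-dimensional'' presupposes the answer: establishing that the moduli of MUB-triplets in dimension $6$ is two-dimensional plus an isolated point is equivalent to the conjecture, not an input to it. The reduction to Hadamard cubes via Proposition~\ref{prop:triplet_equiv} and the polynomial reformulation via Definition~\ref{io} are sound and indeed match the strategy the authors advocate, but as written the proposal contains a genuine gap exactly where the difficulty of Zauner's conjecture is concentrated.
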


The validity of this conjecture would immediately imply that the maximum number of MUBs in dimension 6 is three. Indeed, by the statement of the conjecture, in any MUB-triplet at least one of the transition matrices $X_jX^\ast_k$ is an element of the Fourier family $F(x,y)$, and an earlier result in the literature \cite{arxiv} ensures that such a matrix cannot be part of a quadruple of MUBs.

\medskip

Next we turn to some non-trivial properties of the Hadamard cubes corresponding to the construction \eqref{eq:zauner}. To this end, we need to remind the reader of some terminology introduced in \cite{mubmols}

\medskip

For a unimodular vector $v=(v_1, \dots, v_d)\in \TT^d$ and an integer vector $\gamma=(\gamma_1, \dots, \gamma_d)\in \Z^d$ we will use the notation $v^\gamma=\prod_{j=1}^d v_j^{\gamma_j}$. 

\medskip

Let $H$ be a complex Hadamard matrix of order $d$, with columns $h_1, \dots, h_d$. For an integer vector $\gamma\in \Z^d$ we define

\begin{equation}\label{gG}
g_H(\gamma)=\sum_{k=1}^d h_k^\gamma, \ \ \ G_H(\gamma)=|g_H(\gamma)|^2=  \left (\sum_{k=1}^d h_k^\gamma \right )\left (\sum_{k=1}^d h_k^{-\gamma}\right )
\end{equation}

With this terminology at hand we can formulate the second main conjecture of this paper. 

\medskip

\begin{conjecture}\label{gg}
Let $(X_1, X_2, X_3)$ be an MUB-triplet in dimension 6, and let $H_1=\sqrt{6}X_1^\ast X_2$, $H_2=\sqrt{6}X_2^\ast X_3$ and $H_3=\sqrt{6}X_3^\ast X_1$ denote the corresponding transition matrices. Let $\pi\in S_6$ be any permutation. Then $G_{H_j}(\pi (3,3,3,-3,-3,-3))G_{H^\ast_j}(\pi (3,3,3,-3,-3,-3))=0$, and $G_{H_j}(\pi(1,1,1,-1,-1,-1))=0$ for all $j=1,2,3$.

Equivalently, in terms of the Hadamard cube $C$ associated with the MUB-triplet $(X_1, X_2, X_3)$, we have $G_{H}(\pi (3,3,3,-3,-3,-3))G_{H^\ast}(\pi (3,3,3,-3,-3,-3))=0$, and $G_H(\pi(1,1,1,-1,-1,-1))=0$ for any slice $H$ of the cube $C$.
\end{conjecture}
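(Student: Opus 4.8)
The plan is to reduce both identities to a finite structural check on the explicit families of Section~\ref{sec4}, exploiting the fact that the numbers $G_H(\gamma)$ are invariant under Hadamard equivalence whenever $\sum_i\gamma_i=0$ — which is exactly the case for $\pi(1,1,1,-1,-1,-1)$ and $\pi(3,3,3,-3,-3,-3)$. First I would record this invariance. If $H'=P_1D_1HD_2P_2$ with $D_1=\diag(\alpha_i)$, $D_2=\diag(\beta_k)$ and permutation matrices $P_1,P_2$, a one-line computation gives, for the $k$-th column, $(h'_k)^\gamma=\beta_k^{\sum_i\gamma_i}\big(\prod_i\alpha_i^{\gamma_i}\big)h_{k'}^{\,\rho\gamma}$, where $k'$ is a relabeled column index and $\rho$ is the coordinate permutation induced by $P_1$. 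Since $\sum_i\gamma_i=0$ the right-diagonal factor is trivial and the global phase $\prod_i\alpha_i^{\gamma_i}$ factors out of the sum; hence $g_{H'}(\gamma)=\big(\prod_i\alpha_i^{\gamma_i}\big)\,g_H(\rho\gamma)$ and so $G_{H'}(\gamma)=G_H(\rho\gamma)$. Thus $\{G_H(\pi\gamma):\pi\in S_6\}$ is an invariant of the Hadamard equivalence class of $H$. The ``equivalently'' clause follows at once: by Proposition~\ref{hcube}(iii) every slice of the cube $C$ is a two-sided diagonal scaling of one of the transition matrices $H_1,H_2,H_3$, so the slice formulation and the transition-matrix formulation carry identical $G$-data.

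Next I would invoke the classification. By Conjecture~\ref{mub3} each transition matrix $H_j$ is Hadamard-equivalent — possibly after passing to its adjoint and permuting the roles of $X_1,X_2,X_3$ — to a representative of the Sz\"oll\H osi family $X(\alpha)$, the Fourier family $F(x,y)$, the transposed family $F^T(x,y)$, or to $F_6(1,1)$. By the invariance above it then suffices to verify both identities on these four representatives, for all $\binom{6}{3}$ splittings of the six rows into two triples (and since $G_H(-\gamma)=G_H(\gamma)$, effectively ten of them).

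The verification then splits by family. For the Sz\"oll\H osi representative \eqref{eqsz} the $2$-circulant, conjugate-and-negate structure forces, for \emph{every} splitting, the six column monomials $h_k^{\,\gamma}$ to consist of three values each occurring once with a $+$ and once with a $-$ sign (the top-block columns $1,2,3$ pairing against the bottom-block columns $4,5,6$); hence $g_S(\gamma)=0$ identically, and as cubing preserves this $\pm$-pairing, $g_S(3\gamma)=0$ too. For the Fourier and transposed families the cancellation is different: writing $F^T=\bigl[\begin{smallmatrix}\Omega&D\Omega\\ \Omega&-D\Omega\end{smallmatrix}\bigr]$ with $\Omega$ the $3\times3$ Fourier matrix and $D=\diag(1,x,y)$, each column sum collapses either through the $\pm$-pairing of the two blocks or through $1+\omega+\omega^2=0$, giving $g_{F^T}(\pi(1,1,1,-1,-1,-1))=0$ for every $\pi$. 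Crucially, for the cubic exponent this fails for $H$ alone — for the split $\{1,2,4\}\mid\{3,5,6\}$ one finds $g_{F^T}(\pi(3,3,3,-3,-3,-3))=3(1-\overline{y}^{\,6})\neq0$ — yet the identical computation on $(F^T)^\ast$ returns $0$. This is precisely why the cubic statement is a \emph{product}: for each splitting one of the two orientations $H,H^\ast$ realises the cancellation, and one must show this dichotomy exhausts all splittings. The exceptional triplet, whose cube consists of $24$th roots of unity, is a finite numerical-exact check.

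\textbf{Main obstacle.} The natural splitting is routine; the real work is the uniformity over all ten splittings together with the $H$-versus-$H^\ast$ dichotomy in the cubic case. I expect the cleanest route to be a single lemma computing $G(\gamma)$ for the twisted Kronecker form of the Fourier family, reducing every splitting to a vanishing sum of cube roots of unity (controllable via Lam--Leung), paired with the $\pm$-pairing lemma for the circulant Sz\"oll\H osi form. A more serious caveat is that the entire argument is conditional on Conjecture~\ref{mub3}; an unconditional proof would have to replace the classification step by a direct argument on arbitrary order-$6$ Hadamard cubes, for which I see no obvious substitute.
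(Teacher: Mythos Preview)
The statement you are addressing is \emph{Conjecture}~\ref{gg}: the paper does not prove it and does not claim to. There is no ``paper's own proof'' to compare against; the authors present the identity \eqref{eq:conj} as an open target, suggest that it might be attacked via Gr\"obner-basis techniques or via the polynomial conditions defining inverse orthogonal cubes (Definition~\ref{io}), and then show in Proposition~\ref{propcon1} what would follow \emph{if} it were true.

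Your proposal is not a proof of the conjecture but a conditional reduction to Conjecture~\ref{mub3}. You acknowledge this in your final paragraph, but the issue is more than a caveat: it is the whole point. In the paper's architecture, Conjecture~\ref{mub3} already implies Zauner's conjecture directly (via \cite{arxiv}, as stated immediately after Conjecture~\ref{mub3}), so Conjecture~\ref{gg} is introduced precisely as a \emph{weaker}, potentially more accessible alternative that bypasses the full classification. Deriving Conjecture~\ref{gg} from Conjecture~\ref{mub3} therefore moves in the wrong direction: you would be deducing a conjecture from a strictly harder one, and the resulting implication adds nothing to the paper's program.

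On the technical side, your invariance lemma for $G_H(\gamma)$ when $\sum_i\gamma_i=0$ is correct and is indeed the right tool to pass between transition matrices and slices. The family-by-family verification you sketch is plausible for the natural splitting $\{1,2,3\}\mid\{4,5,6\}$, but your own ``main obstacle'' paragraph is accurate: the $\pm$-pairing argument for the Sz\"oll\H osi form does not obviously survive an arbitrary splitting of the rows (try $\gamma=\mu(\{1,2,4\})$ and you will see that columns $1$ and $4$ no longer cancel), and the $H$-versus-$H^\ast$ dichotomy for the Fourier family across all ten splittings is an honest case analysis that you have not carried out. So even as a conditional statement, the argument is incomplete.

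If you want to engage with Conjecture~\ref{gg} in the spirit the paper intends, the relevant question is whether \eqref{eq:conj} can be derived directly from the cube axioms (i)--(iv) of Proposition~\ref{hcube}, or from the polynomial conditions (a)--(c) of Definition~\ref{io}, without any classification input.
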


\medskip

We can formulate this conjecture as a single algebraic identity as follows. Consider the function
\begin{equation}\label{Gbar}
\tilde{G}_H(\gamma)=\sum_\pi G_H(\pi (\gamma))
\end{equation}
where the summation is taken over all permutations $\pi\in S_6$. Recall that $G_H(\gamma)\ge 0$ for all $\gamma$, so $\tilde{G}_H(\gamma)=0$ happens if and only if each term in the summation is 0. Therefore, $\sum_{j=1}^3\tilde{G}_{H_j}(1,1,1,-1,-1,-1)=0$ is equivalent to $G_{H_j}(\pi(1,1,1,-1,-1,-1))=0$ for each $j$ and each $\pi$, which is the first statement of the conjecture.

\medskip

Similarly, for  $\gamma_2=(3,3,3,-3,-3,-3)$ the equation $\tilde{G}_{H_1}(\gamma_2)\tilde{G}_{H^\ast_1}(\gamma_2)+
\tilde{G}_{H_2}(\gamma_2)\tilde{G}_{H^\ast_2}(\gamma_2)+\tilde{G}_{H_3}(\gamma_2)\tilde{G}_{H^\ast_3}(\gamma_2)=0$ is equivalent to the second statement of the conjecture. 

\medskip

In summary, Conjecture \ref{gg} is equivalent to the following algebraic identity: 

\begin{equation}\label{eq:conj}
\tilde{G}_{H_1}(\gamma_1)+\tilde{G}_{H_2}(\gamma_1)+\tilde{G}_{H_3}(\gamma_1)+\tilde{G}_{H_1}(\gamma_2)\tilde{G}_{H^\ast_1}(\gamma_2)+
\tilde{G}_{H_2}(\gamma_2)\tilde{G}_{H^\ast_2}(\gamma_2)+\tilde{G}_{H_3}(\gamma_2)\tilde{G}_{H^\ast_3}(\gamma_2)=0,
\end{equation}
where $\gamma_1=(1,1,1,-1-1,-1)$ and $\gamma_2=(3,3,3,-3,-3,-3)$. 

\medskip

Note also that, using the identity $\overline{z}=1/z$ for complex numbers of unit length, the function ${G}_H(\gamma)$ can be defined as a rational function of the variables, as indicated in the last equation in \eqref{gG}. As such, equation \eqref{eq:conj} reduces to a single rational function of the variables being 0. In principle, such a relation may be established via Gr\"obner basis techniques, as a consequence of the orthogonality and unbiased relations between the vectors of the bases $X_1, X_2, X_3$.  In particular, equation \eqref{eq:conj} might be implied by conditions a), b) and c) defining inverse orthogonal cubes in Definition \ref{io}.

\medskip

We now show how 
Conjecture \ref{gg} can be an intermediate step in proving Zauner's conjecture that the maximal number of MUBs in dimension 6 is three. To this end, we need to formulate an additional conjecture. (In a previous version of this manuscript it was referred to as a proven fact from the literature, but unfortunately the proof in \cite{chen} turned out to be wrong, as pointed out by \cite{weig}. As such, we need to formulate it as a conjecture here.)

\begin{conjecture}\label{szol}
Any matrix $S\in X(\alpha)$ in the Sz\"oll\H osi family, defined in \ref{eqsz}, cannot be part of an MUB-quadruplet. 
\end{conjecture}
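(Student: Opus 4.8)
The plan is to argue by contradiction: assume some $S\in X(\alpha)$ occurs as a transition matrix of an MUB-quadruple and derive an impossibility. After applying an overall unitary and relabelling the bases, we may assume the quadruple has the form $(I,\tfrac{1}{\sqrt{6}}S,W_1,W_2)$; equivalently, the MUB-pair $(I,\tfrac{1}{\sqrt{6}}S)$ extends to a quadruple by two further bases $W_1,W_2$. The first step is to pass to a more symmetric frame. By the Sz\"oll\H osi decomposition \eqref{eq:zauner} we may write $S=F_1^{-1}F_2$ with $F_1,F_2$ diagonal multiples of Fourier matrices, so by Proposition \ref{pairdu} the pair $(I,\tfrac{1}{\sqrt{6}}S)$ is directly unitarily equivalent to $(\tfrac{1}{\sqrt{6}}F_1,\tfrac{1}{\sqrt{6}}F_2)$, whose transition Hadamard matrix is again $S$. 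Hence it suffices to show that this latter pair cannot be completed to a quadruple. The advantage of the new frame is that $(\tfrac{1}{\sqrt{6}}F_1,\tfrac{1}{\sqrt{6}}F_2)$ is already part of the explicit Zauner triple $(I,\tfrac{1}{\sqrt{6}}F_1,\tfrac{1}{\sqrt{6}}F_2)$, whose two remaining transition matrices are Fourier matrices.

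The naive hope is now to invoke \cite{arxiv}, which forbids a Fourier matrix from being a transition matrix of a quadruple. This does not apply directly, however: a quadruple completing $(\tfrac{1}{\sqrt{6}}F_1,\tfrac{1}{\sqrt{6}}F_2)$ by two new bases $W_1,W_2$ need not contain the basis $I$, so it need not exhibit a Fourier transition matrix at all. Closing exactly this gap is, I expect, the crux of the problem, and the point at which the argument of \cite{chen} breaks down. I would attack it by a direct analysis of the vectors available to $W_1$ and $W_2$. Every such vector $w$ is unbiased to $I$, hence of the form $w=\tfrac{1}{\sqrt{6}}u$ with $u\in\TT^6$; unbiasedness of $w$ to the $j$-th column of $\tfrac{1}{\sqrt{6}}S$ reads $\bigl|\sum_k \bar u_k\, S_{k,j}\bigr|^2=6$ for every $j\in[6]$.

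The key idea is to exploit the $2$-circulant structure of \eqref{eqsz}. Writing the index set as $\Z_3\times\{0,1\}$, the entry $S_{(r,p),(r',p')}$ depends on $r-r'\in\Z_3$, so $S$ commutes with the simultaneous cyclic shift of both triples. Applying the $\Z_3$-Fourier transform $\hat u_{s,p}=\sum_{r\in\Z_3}\om^{\,sr}u_{(r,p)}$ (with $\om=e^{2\pi i/3}$) block-diagonalizes all four circulant blocks at once, so that the linear forms $\sum_k\bar u_k\,S_{k,j}$ organize into three character sectors $s\in\Z_3$, each governed by a single $2\times2$ reduced Sz\"oll\H osi block. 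In each sector the admissible data form an explicit low-dimensional set, and I would then impose the remaining requirements — that $W_1$ consist of six mutually orthogonal such vectors, that $W_2$ likewise, and that $W_1,W_2$ be mutually unbiased — in order to over-determine the system and force a contradiction. I would aim to package this obstruction through the vanishing of the character sums $G_S(\gamma)$ of \eqref{gG} at $\gamma_1=(1,1,1,-1,-1,-1)$ and $\gamma_2=(3,3,3,-3,-3,-3)$, thereby tying the statement back to Conjecture \ref{gg}.

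The main obstacle is that the $\Z_3$-Fourier transform diagonalizes only the \emph{linear} unbiasedness constraints; the \emph{nonlinear} constraints — unimodularity $|u_{(r,p)}|=1$, together with orthonormality within, and mutual unbiasedness between, $W_1$ and $W_2$ — couple the three sectors, since the transform preserves only the aggregate norms $\sum_p|\hat u_{s,p}|^2$. The heart of the proof is therefore to control this coupled system uniformly in the parameter $\alpha$, including the degenerate special values (for instance the intersection point $F^T(\om,\om)$ of $X(\alpha)$ with the transposed Fourier family, where $S$ is itself Fourier and \cite{arxiv} applies directly). I expect the self-adjointness of $X(\alpha)$ to be useful here, letting the $\gamma_2$-term, which pairs $G_S$ with $G_{S^\ast}$, be reduced to the same computation as the $\gamma_1$-term.
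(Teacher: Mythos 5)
The statement you are addressing is not proved in the paper at all: the authors explicitly formulate it as an open conjecture, noting that the purported proof in \cite{chen} is wrong, and they only remark that it could ``in principle'' be established by the discretization technique of \cite{arxiv}. So there is no paper proof to compare against, and the real question is whether your proposal constitutes a proof. It does not. Your opening reduction is sound: using \eqref{eq:zauner} and Proposition \ref{pairdu} to replace the pair $(I,\tfrac{1}{\sqrt{6}}S)$ by the directly unitarily equivalent pair $(\tfrac{1}{\sqrt{6}}F_1,\tfrac{1}{\sqrt{6}}F_2)$ is legitimate, and you correctly identify why \cite{arxiv} cannot be invoked directly --- a completing quadruple need not contain $I$, hence need not exhibit a Fourier transition matrix. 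But everything after that point is a plan rather than an argument: the $\Z_3$-Fourier transform does block-diagonalize the linear unbiasedness constraints coming from the $2$-circulant form \eqref{eqsz}, yet, as you yourself concede, unimodularity, intra-basis orthogonality, and the unbiasedness between $W_1$ and $W_2$ all couple the three character sectors, and you never carry out the ``over-determination'' that is supposed to yield the contradiction.

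The gap is not cosmetic. In the frame $(\tfrac{1}{\sqrt{6}}F_1,\tfrac{1}{\sqrt{6}}F_2)$ the standard basis $I$ is itself unbiased to both members of the pair, so an entire orthonormal basis of admissible vectors exists for every $\alpha$; your sector-by-sector analysis of a single vector $u$ therefore cannot produce any obstruction. The impossibility, if true, must come from the global requirement that \emph{two} such bases $W_1,W_2$ exist and be mutually unbiased to each other, uniformly over the two-parameter family $X(\alpha)$, and no mechanism for extracting a contradiction from that coupled, parameter-dependent system is given. The suggestion to route the obstruction through the vanishing of $G_S(\gamma_1)$ and $G_S(\gamma_2)$ also does not help here: those identities (Conjecture \ref{gg}) concern triplets and are themselves unproven, and in the paper's logical architecture Conjecture \ref{szol} is an input \emph{independent} of Conjecture \ref{gg} in the proof of Proposition \ref{propcon1}. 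What you have written is a reasonable research direction --- and one genuinely different from the discretization route the authors gesture at --- but it is not a proof, and the statement remains open.
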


In principle, this conjecture can be proven with the discretization technique in \cite{arxiv}, although the actual implementation could be tedious. As such, we do not consider this conjecture as a major obstacle in the future. Conjecture \ref{gg} (or, equivalently, the algebraic identity \eqref{eq:conj}) should be regarded as the main conjecture in this paper.

\medskip

After this preparation, we can now show how these conjectures would imply that the maximum number of MUBs in dimension 6 is three.

\begin{proposition}\label{propcon1}
The validity of Conjectures \ref{gg} and \ref{szol} implies that the maximal number of MUBs in $\Co^6$ is three.  
\end{proposition}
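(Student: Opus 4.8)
The maximal number of MUBs in $\Co^6$ is at most $d+1=7$ in general, and the construction \eqref{hff} exhibits an MUB-triplet, so the number is at least three; the content of the statement is therefore the upper bound, namely that no MUB-quadruplet exists. I would argue by contradiction, assuming that $(X_1,X_2,X_3,X_4)$ is an MUB-quadruplet in $\Co^6$.

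Discarding $X_4$ leaves the MUB-triplet $(X_1,X_2,X_3)$, whose transition matrices $H_1=\sqrt{6}\,X_1^\ast X_2$, $H_2=\sqrt{6}\,X_2^\ast X_3$, $H_3=\sqrt{6}\,X_3^\ast X_1$ are exactly those appearing in Conjecture \ref{gg}. Under the hypothesis that Conjecture \ref{gg} holds, these three matrices satisfy $G_{H_j}(\pi\gamma_1)=0$ and $G_{H_j}(\pi\gamma_2)\,G_{H_j^\ast}(\pi\gamma_2)=0$ for all $j\in\{1,2,3\}$ and all $\pi\in S_6$, with $\gamma_1=(1,1,1,-1,-1,-1)$ and $\gamma_2=(3,3,3,-3,-3,-3)$; equivalently, every slice of the associated Hadamard cube obeys these relations.

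The heart of the argument is to turn this purely algebraic information into the structural statement that at least one of the $H_j$ is equivalent, in the sense of Proposition \ref{pairpu}, to a member of the Sz\"oll\H osi family $X(\alpha)$ of \eqref{eqsz}. I would establish this by analysing order-$6$ complex Hadamard matrices: the exponent patterns $\gamma_1$ and $\gamma_2$ are adapted to the $3+3$ splitting of the indices, and requiring $G_H(\pi\gamma_1)=0$ for every permutation $\pi$ forces the vanishing of all the corresponding character sums, which should be shown to be equivalent to the two-circulant block structure underlying \eqref{eqsz}; the coupling of the three slices through the cube relations of Proposition \ref{hcube} and the $\gamma_2$-product condition then singles out a Sz\"oll\H osi slice. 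The exceptional situation, in which all three transition matrices are equivalent to the Fourier matrix $F_6(1,1)$, is covered as well, because that matrix sits in the intersection of the Fourier and Sz\"oll\H osi families and is hence itself equivalent to a member of $X(\alpha)$. This classification is where essentially all the difficulty resides, since it is precisely the bridge between the algebraic identity \eqref{eq:conj} and the known geometry of MUB-triplets; in practice I expect it to require either an explicit parametrisation of the relevant Hadamard matrices or a Gr\"obner-basis computation carried out from conditions a), b), c) of Definition \ref{io}.

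Once the key step is in place, the conclusion follows at once. Suppose $H_j$ is equivalent to some $S\in X(\alpha)$, and let $(X_a,X_b)$ be the MUB-pair with $H_j=\sqrt{6}\,X_a^\ast X_b$; this pair sits inside the quadruple $(X_1,X_2,X_3,X_4)$, so it extends to an MUB-quadruplet. By Proposition \ref{pairpu} the pair $(X_a,X_b)$ is permutationally unitary equivalent to the MUB-pair determined by $S$, and since unitary conjugation and reordering of bases preserve all MUB relations, transporting the extension through this equivalence shows that $S$ itself is part of an MUB-quadruplet. This contradicts Conjecture \ref{szol}. Hence no MUB-quadruplet exists in $\Co^6$, and together with the lower bound the maximal number of MUBs there is exactly three.
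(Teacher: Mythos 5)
Your overall skeleton (pass to the triplet, apply Conjecture \ref{gg} to the transition matrices, classify the slices, contradict Conjecture \ref{szol}) is the right shape, but the step you yourself identify as ``the heart of the argument'' is exactly where the paper does all of its work, and your version of that step is both unproven and, as stated, stronger than what is actually available. The paper does \emph{not} show that some $H_j$ is equivalent to a member of the Sz\"oll\H osi family. What it proves (via Lemma \ref{lemma:cancellation}, Lemma \ref{lemma:cancellingvector}, Proposition \ref{alg}, Corollary \ref{cor:RHtoscalar}, and Propositions \ref{prop:row_minus_one} and \ref{tm1}) is that the conditions $G_H(\pi\gamma_1)=G_H(\pi\gamma_2)=0$ force each slice, after dephasing, to contain $-1$ entries in three distinct rows; it then invokes the classification of \cite{ms} to conclude that each $H_j$ lies in the Fourier family $F(x,y)$ \emph{or} the Sz\"oll\H osi family $X(\alpha)$. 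The Fourier alternative cannot be eliminated: it is disposed of by citing the separate, already proven result of \cite{arxiv} that no member of $F(x,y)$ extends to an MUB-quadruplet. Your proposal never uses \cite{arxiv}, and to avoid it you assert that the cube relations ``single out a Sz\"oll\H osi slice''; nothing in the hypotheses of Proposition \ref{propcon1} (which assumes only Conjectures \ref{gg} and \ref{szol}, not Conjecture \ref{mub3}) supports that assertion.

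Your treatment of the exceptional case compounds the problem: you claim $F_6(1,1)$ ``sits in the intersection of the Fourier and Sz\"oll\H osi families,'' but the paper states that $F^T(x,y)$ and $X(\alpha)$ meet only at $F^T(\omega,\omega)$, which is a different member of the family from $F(1,1)$; so this case is not covered by Conjecture \ref{szol} and genuinely requires the Fourier non-extendability result. In short, the missing ingredients are (a) the actual derivation, from the vanishing of the power sums $g_H(\pi\mu(I))$ and $g_H(3\pi\mu(I))$, of the binary structure of the column-product vectors and hence of the three $-1$ entries in distinct rows (this is the chain of lemmas culminating in Proposition \ref{tm1} together with the citation of \cite{ms}), and (b) the appeal to \cite{arxiv} for the Fourier branch of the resulting dichotomy. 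Your final step, transporting the hypothetical quadruplet through a permutational unitary equivalence to contradict Conjecture \ref{szol}, is fine.
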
 

\begin{proof}
We first show that 
if for a complex Hadamard matrix $H$ of order 6 the equations 
\begin{equation}\label{13}
G_{H}(\pi(1,1,1,-1,-1,-1))=G_{H}(\pi(3,3,3,-3,-3,-3))=0
\end{equation}
hold for all permutations $\pi$, then $H$ must belong to the Fourier family $F(x,y)$ or to the Sz\"oll\H osi family $X(\alpha)$. Once this is established, we can fall back on the result of \cite{arxiv}, that matrices from the Fourier family $F(x,y)$  cannot be part of any quadruplet of MUBs, and hence Zauner's conjecture follows if $H$ is belongs to the Fourier family. If $H$  belongs to the Sz\"oll\H osi family $X(\alpha)$, we fall back on Conjecture \ref{szol} above.

\medskip

Note that
$G_{H}(\pi(1,1,1,-1,-1,-1))=0$ and $G_{H}(\pi(3,3,3,-3,-3,-3))=0$ for all permutations $\pi$ occurs if and only if 
\begin{equation}\label{Gbar0}
\tilde{G}_H(1,1,1,-1,-1,-1)=0, \ \ \ \tilde{G}_H(3,3,3,-3,-3, -3)=0.
\end{equation}

It remains to show that in this case $H$ belongs to the Fourier family $F(x,y)$ or the Sz\"oll\H osi family $X(\alpha)$. We will need some auxiliary definitions and lemmas during the proof. We first recall a general definition from \cite{BBS} which makes sense in any even dimensions.

\medskip

\begin{definition}\label{def:cancellation}
	A vector $v\in \C^{2n}$ is called \emph{binary}, if it is of the form $(x_1,-x_1\stb x_n,-x_n)$, up to a permutation of the indices. We will call a scalar product $\bra v,w\ket$ \emph{binary}, if the vector $\conj{v}.w$ is binary, where $.$ denotes the element-wise product.
\end{definition}

We can characterize binary vectors by the following lemma. 

\medskip

\begin{lemma}\label{lemma:cancellation}
	A set of complex numbers $\{\al_i\in \C: i=1\stb 2n\}$ is binary if and only if 
\begin{equation}\label{ak}
\sum_{i=1}^{2n} \al_i^k=0
\end{equation}
holds for all odd $k$ in the range $1\leq k \leq 2n-1$. If all $\al_i\in \C$ have unit length, then it is enough to assume \eqref{ak} for all odd $k$ in the range  $1\le k\le n$ \end{lemma}
\begin{proof}
One direction of the statement is trivial. For the other direction, write $p_j=\sum_{j=1}^{2n} \alpha_i^j$ for the $j$th power sum, and $e_j$ for the $j$th elementary symmetric polynomial of the variables $\al_i$. 
Assume that all $p_k=0$ for all odd $k$ in the range $1\le k\le 2n-1$. 
Let \[f(z):=\prod_{j=1}^{2n}(z-\al_j)=\sum_{j=0}^{2n} (-1)^je_jz^{2n-j}\]
It is enough to show that $f$ is an even function, $f(z)=f(-z)$, because in that case $f(z)=g(z^2)$ for some polynomial $g(z)$, and hence the roots of $f$ come in pairs $\pm \alpha_j$. To this end, it is enough to show that all $e_{2j-1}=0$ for $1\le j\le n$. This, in turn, follows from $p_1=e_1=0$, and  induction on $j$ via Newton's identities,
$(2j-1)e_{2j-1}=\sum_{k=1}^{2j-1} (-1)^{k+1} e_{2j-1-k}p_k$. Each term here contains an odd index $p_k$ or an odd index $e_i$ with $i<2j-1$, and hence vanishes by induction. 

 \medskip
 
If all $\al_j$ satisfy $\al_j\conj{\al_j}=1$, then $\conj{e_j}\cdot e_{2n}=e_{2n-j}$, and hence the vanishing of $e_k$ implies the vanishing of $e_{2n-k}$. Therefore, it is enough to assume $p_k=0$ for odd $k$ in the range $1\le k\le n$, and the same proof applies verbatim.
\end{proof}

In dimension 6, unimodular binary vectors have a particularly simple characterization:
\begin{lemma}\label{lemma:cancellingvector}
	The following are equivalent for a zero-sum vector $\vb=(v_1, \dots, v_6) \in \TT^{6}$:
	\begin{itemize}
		\item[1)]  $\vb$ is binary,
		\item[2)] for any $i$, the normalized vector $\vb/v_i$ contains a -1,
		\item[3)] there exists some pair of indices $i,j$, such that $v_i+v_j=0$.
	\end{itemize}
\end{lemma}
\begin{proof}
	Directions $1\then 2$ and $2\then 3$ are trivial. For the implication  $3\then 1$ observe that if $v_i+v_j=0$, then the rest of the entries also sum up to zero, and we can apply Lemma \ref{lemma:cancellation} with $n=2$. 
\end{proof}

\medskip

We will need yet another algebraic condition which implies that a unimodular vector is binary. 

\begin{proposition}\label{alg}
	Assume that $\xb=(x_1\stb x_6)$ is a unimodular vector such that $x_1+\dots +x_6=0$ and  $x_1x_3x_5+x_2x_4x_6=0$. Then $\xb$ is binary, and $(x_2,x_4,x_6)$ can be reordered so that $x_1+x_2=x_3+x_4=x_5+x_6=0$ holds.
\end{proposition}
\begin{proof}
By the assumptions $x_1+\dots +x_6=0, \  x_1x_3x_5+x_2x_4x_6=0$,
	and the fact that each $x_i$ has unit length, we obtain
	\begin{equation}\label{eq:e2}
        x_1x_3+x_3x_5+x_5x_1=(\overline{x_1}+\overline{x_3}+\overline{x_5})x_1x_3x_5=(\overline{x_2}+\overline{x_4}+\overline{x_6})x_2x_4x_6=x_2x_4+x_4x_6+x_6x_2.
	\end{equation}

    Using this fact and the assumptions again, we get

	\begin{align}
		\label{eq:1}
		(x_1+x_2)(x_1+x_4)(x_1+x_6)=\\
x_1^3+x_1^2(x_2+x_4+x_6)+x_1(x_2x_4+x_4x_6+x_6x_2)+x_2x_4x_6=\\
x_1(x_1^2-x_1(x_1+x_3+x_5)+(x_1x_3+x_3x_5+x_5x_1)-x_3x_5)=0,
	\end{align}
where the last equation is trivial, as the expression in the bracket is formally zero.

\medskip

By permutational symmetry, we can assume that the first factor of the product in \eqref{eq:1} is zero, i.e. $x_1+x_2=0$. Then, by Lemma  \ref{lemma:cancellingvector}, the vector $\xb$ must be binary, and hence the negative of $x_3$ must appear among $x_4, x_5, x_6$. If $x_3+x_4=0$ or $x_3+x_6=0$, then the proof is finished.

\medskip

The only remaining case to consider is when $x_3+x_5=0$, in which case $x_4+x_6=0$ also holds. In this case,
	\[
	0=x_1x_3x_5+x_2x_4x_6=-(x_1x_3^2+x_2x_4^2)=x_2(x_3^2-x_4^2),
	\]
	and hence $x_3=\pm x_4$. 
    If $x_3=-x_4$, the proof is finished. Finally, if $x_3=x_4$, then $x_3=-x_6$ due to the relation $x_4+x_6=0$. 
    \end{proof}

In the sequel, we will use the shorthand notation $I\in \binom{n}{k}$ for a $k$-element subset $I$ of $\{1\stb n\}$. Given $I\in\binom{n}{k}$, let $\mu(I)=(\mu_1\stb \mu_n)\in \{\pm1\}^n$ be defined by
\begin{equation}\label{eq:muI}
	\mu_j=\begin{cases}
		-1,\qquad & j\in  I\\
		1,\qquad &j\not\in I.
	\end{cases}
\end{equation}

\begin{corollary}\label{cor:RHtoscalar}
	Let $\av, \bv$ be unimodular vectors in $\CC^6$, and assume that for some $I\in\binom{6}{3}$ and $\la\in \TT$ we have
	\[\av^{\mu(I)}=\la, \ \ \ \bv^{\mu(I)}=-\la,
    \]
	where we use the notation $\xb^\ga=\prod x_i^{\ga_i}$.
	
	Then the scalar product $\bra a, b\ket$ is binary,  such that the cancellations are compatible with $I$ in the following sense: there is a bijection $\varphi:I\to I^c$, such that for all $i\in I$:
	\[
	a_i/b_{i}+a_{\varphi(i)}/b_{\varphi(i)}=0
	\]
\end{corollary}
\begin{proof}
By permutational symmetry, we can assume without loss of generality that $I=\{1,3,5\}$, and then we can apply Proposition \ref{alg} with   $x_i=a_i/b_i$.
\end{proof}

We remind the reader that given a complex Hadamard matrix $H$, the operation "{\it dephasing $H$ by row $i$ and column $j$}" means that we multiply the rows and columns of $H$ by unit numbers in such a way that row $i$ and column $j$ become the constant 1 vectors. We can give a simple application of  Corollary \ref{cor:RHtoscalar} in terms of this dephasing operation. 

\begin{proposition}\label{prop:row_minus_one}
	Let $I\in \binom{6}{3}$, and let $H$ be a complex Hadamard matrix of order 6. Assume that $H$ is $I$-binary, i.e for the columns $h_1, \dots, h_6$ of $H$, we have that the vector  $(h_1^{\mu(I)}, \dots, h_6^{\mu(I)}$ is binary. Then, after dephasing $H$ by a row in $I$ and an arbitrary column, there will be a row in $I^c$ which contains a $-1$ entry in the dephased matrix. 
\end{proposition}
\begin{proof}
For simplicity, introduce the notation $\alpha_j=h_j^{\mu(I)}$. 
Pick an arbitrary column of $H$. By permutational symmetry, we can assume it is $h_1$ .
	Since $(\al_1\stb \al_6)$ is binary, there exists $j$ such that $\al_1+\al_j=0$. Without loss of generality, assume that $\al_1+\al_2=0$. This means that $h_1^{\mu(I)}=\al_1$ and  $h_2^{\mu(I)}=-\al_1$. Let us dephase $H$ by $h_1$ and a row $i$ in $I$. By applying Corollary \ref{cor:RHtoscalar} we get a bijection $\varphi:I\to I^c$,  and \[h_{1,i}/h_{2,i}+h_{1,\varphi(i)}/h_{2,\varphi(i)}=1/1+1/h_{2,\varphi(i)}=0\] which is exactly the claim.
\end{proof}

\begin{proposition}\label{tm1}
	Let $H$ be a Hadamard matrix which satisfies $g_H(\pi(1,1,1,-1,-1,-1))=0$, and
	$g_H(\pi(3,3,3,-3,-3,-3))=0$ for all permutations $\pi$. Then after dephasing $H$ by an arbitrary row and column, there exist three distinct rows containing a $-1$ entry.
\end{proposition}
\begin{proof}
	First, recall that if for $I\in\binom{6}{3}$, $g_H(\mu(I))=g_H(3\cdot \mu(I))=0$, then $H$ is $I$-binary by Lemma \ref{lemma:cancellation}, so Proposition \ref{prop:row_minus_one} can be applied.
	
	Pick an arbitrary row and column, say the first one, and dephase the matrix. Then taking $I=(1,5,6)$, by Proposition \ref{prop:row_minus_one}, there exists a row in $I^c$, which contains a $-1$. Without loss of generality, let this be column number $2$. Then taking $I=(1,2,6)$, we find a second row in $I^c$ containing a $-1$, let this be row number 3. Finally, taking $I$ to be $(1,2,3)$, its complement must also contain a $-1$, and hence we find the third row with a $-1$ entry. Thus the matrix contains three $-1$'s in different rows, as claimed.
\end{proof}

\medskip

Finally, we can invoke the results of \cite{ms}, stating that if a complex Hadamard matrix $H'$ of order 6 contains -1 entries in three distinct columns, then $H'$ must belong to the transposed Fourier family $F^T(x,y)$ or the Sz\"oll\H osi family $X(\al)$. As we have -1 entries in three distinct rows of $H$, we conclude that $H$ belongs to the Fourier family $F(x,y)$ or the Sz\"oll\H osi family $X(\al)$. 

\medskip 

This completes the proof of Proposition \ref{propcon1}.
\end{proof}

In summary, we have exhibited a single algebraic identity \eqref{eq:conj} for MUB-triplets which would imply (given the validity of the minor Conjecture \ref{szol}) that the maximum number of MUBs in dimension 6 is three. Such an algebraic identity could, in principle, be proven using the basic cube properties (i-iv) in Proposition \ref{hcube}.  

\section{Acknowledgements}

The authors are indebted to Ferenc Sz\"oll\H osi for insightful remarks. 

M. M. was supported by grants NKFIH-132097 and NKFIH-146387. D. V. and Á.K.M. were  supported by the
Ministry of Innovation and Technology NRDI Office within the framework of the Artificial
Intelligence National Laboratory (RRF-2.3.1-21-2022-00004). Á.K.M. was also supported by the Hungarian NRDI Office grants NKFIH K-138828 and PD-145995. 
M.W. was supported by the National Research, Development and Innovation Office of Hungary (NKFIH) via the research grants K 146380 and EXCELLENCE 151342, and by the Ministry of Culture and Innovation and the National Research, Development and Innovation Office within the Quantum Information National Laboratory of Hungary (Grant No. 2022-2.1.1-NL-2022-00004).

\end{document}